\newcommand{\defeq}{\overset{\text{d}}{=}}
\newcommand{\mh}{\mathcal{H}}
\newcommand{\mg}{\mathcal{G}}
\newcommand{\mb}{\mathcal{B}}
\newcommand{\mv}{\mathcal{V}}
\newcommand{\mi}{\mathcal{I}}
\newcommand{\bdd}{\mbox{$\partial$}}
\newcommand{\mr}{\mathscr{R}}
\newcommand{\dd}{\operatorname{Diff}^{+}}
\newcommand{\gp}{Goeritz\;group}
\begin{document}

\newtheorem{thm}{Theorem}[section]
\newtheorem{lem}[thm]{Lemma}
\newtheorem{cor}[thm]{Corollary}
\newtheorem{pro}[thm]{Proposition}
\newtheorem{exm}[thm]{Example}
\newtheorem{clm}[thm]{Claim}

\theoremstyle{definition}
\newtheorem{defn}{Definition}[section]
\newtheorem{assum}[defn]{Assumption}
\newtheorem{ques}[defn]{Question}
\newtheorem{note}[defn]{Note}

\theoremstyle{remark}
\newtheorem{rmk}{Remark}[section]

\def\square{\hfill${\vcenter{\vbox{\hrule height.4pt \hbox{\vrule
width.4pt height7pt \kern7pt \vrule width.4pt} \hrule height.4pt}}}$}
\def\T{\mathcal T}

\newenvironment{pf}{{\it Proof:}\quad}{\square \vskip 12pt}

\title{Genus three Goeritz groups of connected sums of two Lens spaces}

\author{Hao Chen}

\address{Department of Mathematics, East China Normal University, Dongchuan Road No.500, Minhang District, Shanghai, China, 200241
} \email{hchen@stu.ecnu.edu.cn}

\author{Yanqing Zou}
\address{Department of Mathematics, East China Normal University, Dongchuan Road No.500, Minhang District, Shanghai, China, 200241
} \email{yqzou@math.ecnu.edu.cn}

\begin{abstract}
We prove that the mapping class groups of the genus $3$ \emph{Heegaard splittings} of the connected sum of two lens spaces are finitely generated, and the corresponding reducing sphere complexes are all connected.

\end{abstract}

\maketitle

\vspace*{0.5cm} {\bf Keywords}: \emph{Heegaard splitting}, \emph{\gp}, \emph{Reducing sphere complex}.\vspace*{0.5cm}

\textbf{2020 Mathematics Subject Classification}:
57K30, 57K20; 20F05

\section{Introduction}
\label{sec1}
It is well known that every closed orientable 3-manifold $N$ admits a \emph{Heegaard splitting} $V\cup_{\Sigma} W$, which is a decomposition of $N$ into two handlebodies $V\text{ and }W$ of the same genus. Their common boundary surface $\Sigma$ is called the $Heegaard\; surface$, and the genus of $\Sigma$ is referred to as the genus of the $Heegaard\; splitting$. Two \emph{Heegaard splittings} of $N$ are said to be isotopic if their corresponding $Heegaard\;surfaces$ are ambient isotopic. The $Goeritz\;group$ $\mg\left(N,\Sigma\right)$, firstly introduced by Goeritz \cite{goeritz_abbildungen_1933}, is the group of isotopy classes of orientation-preserving diffeomorphisms of $N$ preserving these two handlebodies of the splitting setwise. As a subgroup of the mapping class group of $\Sigma$, there is an open question about it in \cite{gordon_problems_2007}. 
\begin{ques}
  Is \emph{Goeritz group}  
 finte or finitely generated? 
\end{ques}

By works of Namazi,  Johnson,  Qiu, and the second author,  Goeritz groups of almost all distance at least 2 Heegaard splittings are finite. However, if  $V\cup_{\Sigma} W$ is weakly reducible, or equivalently, has the distance of at most 1, $\mg\left(N,\Sigma\right)$ is infinite as shown in Namazi's construction. In this paper, we focus on studying the finitely generated problem for the Goeritz group of weakly reducible Heegaard splittings.

We begin by considering the case of a reducible Heegaard splitting. A Heegaard splitting $V\cup_{\Sigma} W$ is reducible if there is a 2-sphere $S \subset N$ intersecting $\Sigma$ transversely in one essential simple  closed curve; such a 2-sphere $S$ is called a reducing sphere for $\Sigma$. Usually, we can choose a separating reducing 2-sphere $S$. Then $V\cup_{S} W$ is a connected sum of two smaller genera Heegaard splittings, denoted by $V\cup_{\Sigma}W= \left(N_1=V_{1}\cup_{\Sigma_{1}}W_{1}\right) \sharp \left(N_2=V_{2}\cup_{\Sigma_{2}}W_{2}\right)$. It is known that minimal Heegaard genera are additive under connected sums. A natural question arises:  If both of $\mg(N_i,\Sigma_i)$ are finitely generated for $i=1,2$, is $\mg\left(N,\Sigma\right)$ also finitely generated? 

If $g(\Sigma_{1})+g(\Sigma_{2})=2$, Cho and Koda proved that $\mg\left(N,\Sigma\right)$ is finitely presented \cite{cho_mapping_2019}. Here we study the case that $g(\Sigma_{1})+g(\Sigma_{2})=3$ and give an answer to Question 1.1 as follows. 

\begin{thm}\label{finitely}
If $N=V\cup_{\Sigma} W$ is a genus three reducible Heegaard splitting for a connected sum of two lens spaces,  then $\mg\left(N,\Sigma\right)$ is finitely generated.
\end{thm}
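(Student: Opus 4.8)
The plan is to make $\mg(N,\Sigma)$ act on the reducing sphere complex $\mr=\mr(N,\Sigma)$, whose vertices are the isotopy classes of reducing spheres for $\Sigma$ and whose edges join two such spheres that can be realized disjointly (or meeting in a single elementary configuration), and then to invoke the standard fact that a group acting on a connected graph with finite quotient graph and finitely generated vertex stabilizers is itself finitely generated. Thus the argument splits into three tasks: (a) $\mr$ is connected; (b) $\mg(N,\Sigma)$ has only finitely many orbits of vertices and of edges of $\mr$; (c) the $\mg(N,\Sigma)$-stabilizer of a reducing sphere is finitely generated.

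For the preliminaries I would first normalize $\Sigma$. As $N$ is a connected sum of two lens spaces it has Heegaard genus $2$, each lens space has a unique Heegaard splitting by Bonahon--Otal, and stabilizations are unique by Reidemeister--Singer; hence up to isotopy $\Sigma$ is the single stabilization of $\Sigma_1\sharp\Sigma_2$, with $\Sigma_i$ the genus one splitting of the $i$-th summand. Any separating reducing sphere $S$ cuts $\Sigma$ into subsurfaces of genera $1$ and $2$, and the genus one side bounds a once-punctured $S^3$ (the ``stabilization sphere''), or a once-punctured copy of one of the two lens summands; this produces only finitely many topological types. For (b) I would show $\mg(N,\Sigma)$ acts transitively on each type: given two reducing spheres of the same type, an innermost-disk and outermost-arc argument brings them into a common standard position relative to $\Sigma$, and an abstract homeomorphism of the complementary pieces identifying one with the other is promoted to an ambient diffeomorphism in $\mg(N,\Sigma)$ using once more the uniqueness of lens-space splittings and of stabilizations; a similar enumeration of the possible edge configurations gives finitely many edge orbits. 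For (a), given reducing spheres $S,S'$ in minimal position I would induct on $|S\cap S'|$: an innermost circle of $S\cap S'$ on $S'$ surgers $S$ into a sphere meeting $S'$ in fewer circles, and the crux is to check that (one of) the resulting spheres is again a reducing sphere, i.e.\ its trace on $\Sigma$ is essential, and is joined to $S$ by an edge of $\mr$, so $S$ and $S'$ land in the same component.

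For (c), cutting $N$ along a reducing sphere $S$ presents $V\cup_\Sigma W$ as a connected sum $\bigl(V_1\cup_{\Sigma_1}W_1\bigr)\sharp\bigl(V_2\cup_{\Sigma_2}W_2\bigr)$ with $g(\Sigma_1)+g(\Sigma_2)=3$, and in the spirit of Cho--Koda there is an exact sequence relating the subgroup of $\mg(N,\Sigma)_S$ preserving the two sides to $\mg(N_1,\Sigma_1)\times\mg(N_2,\Sigma_2)$ --- after the appropriate pointed/punctured adjustment --- with kernel the cyclic group generated by the twist of $\Sigma$ along a curve parallel to $S\cap\Sigma$; allowing the side-swap when the two pieces are homeomorphic only passes to a finite-index overgroup. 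The genus one factor is the Goeritz group of a lens space or of $S^3$, which is finite or virtually cyclic, and the genus two factor is the Goeritz group of a lens space or of a connected sum of two lens spaces, which is finitely presented by \cite{cho_mapping_2019}; so $\mg(N,\Sigma)_S$ is finitely generated. Putting (a), (b), (c) together and choosing a connected fundamental domain for the action on $\mr$, the group $\mg(N,\Sigma)$ is generated by the finitely many vertex stabilizers together with finitely many elements moving edges of the domain onto their neighbours, hence is finitely generated. I expect the principal difficulty to lie in (a) --- ensuring a genuine reducing sphere is produced at each surgery step (and not one whose trace on $\Sigma$ degenerates, or whose innermost disk lies on the wrong side of $\Sigma$) --- and, to a lesser extent, in the transitivity claim of (b), where the uniqueness results for lens-space Heegaard splittings have to be used most carefully.
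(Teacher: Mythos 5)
Your overall architecture---let $\mg(N,\Sigma)$ act on the reducing sphere complex $\mr$, establish connectedness, finitely many vertex and edge orbits, and finitely generated vertex stabilizers, then invoke the standard graph-action criterion---is a sound strategy, and your step (c) is essentially the paper's Theorem~\ref{thm2}: the paper runs the Birman exact sequence for the marked disk, two short exact sequences built from the bubble and its dual, with kernel intersection $\langle\tilde\tau_\mu\rangle$, and then cites the genus $\le 2$ results of Cho and Cho--Koda. Your step (b) is also in the spirit of Lemma~\ref{transitive} (transitivity on complete sphere triplets). The paper itself, however, does not pass through the graph-action formalism: it proves directly that $\mg(N,\Sigma)=\langle \mh_1,\mh_2,\mh_3\rangle$ (Theorem~\ref{th}), the stabilizers of a fixed complete triplet, and derives connectedness of $\mr$ afterwards as Corollary~\ref{reducing}.

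The genuine gap is in your step (a). You propose to prove connectedness of $\mr$ by inducting on $|S\cap S'|$ and surgering $S$ along an innermost disk of $S'$. As you yourself flag, the surgered sphere need not be a reducing sphere: its trace on $\Sigma$ can be a union of several circles, or can become inessential, and even when the innermost disk lies wholly in one handlebody the $V$- and $W$-sides of the surgered sphere need not be single disks. Forcing alignment after the fact requires further isotopies and outermost-arc compressions whose interaction with $\Sigma$ is precisely the hard point; this is the well-known obstruction that made Powell's conjecture so resistant. The paper avoids this entirely by importing Freedman--Scharlemann's uniqueness in Haken's theorem (Theorem~\ref{align}): any two alignments of a reducing sphere with $\Sigma$ differ by a sequence of bubble moves and eyeglass twists, and the technical content of Section~4 (Lemmas~\ref{reduce}, \ref{sepe}, \ref{same}, together with the eyeglass-twist and visional bubble move machinery of Section~3) is a case analysis showing these moves are absorbed into $\mh$. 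If you want to keep the graph-action framing, you still need an input of the strength of strong Haken for (a); the bare innermost-circle argument does not close, and filling it in would essentially reproduce the Freedman--Scharlemann route.
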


Furthermore, we study the reducing sphere complex $\mr$ for $V\cup_{\Sigma} W$, which is a subcomplex of the curve complex spanned by those curves that bound disks in both  handlebodies.
As a corollary, we have
\begin{cor}\label{reducing}
Under the same condition in Theorem \ref{finitely}, $\mr$ is connected.
\end{cor}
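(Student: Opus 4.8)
The plan is to read off the connectivity of $\mr$ from the structural analysis that underlies Theorem~\ref{finitely}. The first step is to classify the vertices of $\mr$. Since $N$ is a connected sum of two lens spaces its prime decomposition is unique, and since Heegaard genus is additive under connected sum, every reducing sphere $S$ for $\Sigma$ meets $\Sigma$ in a curve separating it into a genus-$1$ and a genus-$2$ piece; capping the pieces with the two disks of $S$, one is the (rigid) genus-$1$ \hf\ of one summand $L(p_i,q_i)$ and the other is the once-stabilized genus-$2$ \hf\ of the remaining summand. This assigns to each vertex of $\mr$ a ``type'', recording which lens space lies on the genus-$1$ side; since $\mg(N,\Sigma)$ at most permutes the two prime summands, there are at most two types, say $\mathcal{A}$ and $\mathcal{B}$, and — as the analysis behind Theorem~\ref{finitely} provides — only finitely many $\mg(N,\Sigma)$-orbits of vertices.

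Next I would connect the types. Given $S_A\in\mathcal{A}$, the genus-$2$ piece it cuts off is a stabilized splitting of a lens space, hence reducible; pushing a reducing sphere $S'$ of that genus-$2$ splitting into the corresponding submanifold makes $S'$ disjoint from $S_A$, and a direct inspection of the induced connected-sum decomposition shows $S'$ is again a reducing sphere for $\Sigma$, now of the opposite type. Thus $\mathcal{A}$ and $\mathcal{B}$ (when distinct) are joined by an edge of $\mr$, and the same device shows the finitely many orbit representatives are pairwise connected in $\mr$. It then remains to join an arbitrary vertex to the representative of its orbit, and for this I would use the finite generating set $\{g_1,\dots,g_k\}$ of $\mg(N,\Sigma)$ furnished by the proof of Theorem~\ref{finitely}: for each generator $g_i$ and each orbit representative $S$, one checks from the explicit form of $g_i$ — eyeglass/Powell-type twists, slides over the stabilizing handle, and the generators of the genus-$2$ Goeritz group of a lens space — that $g_i^{\pm1}\cdot S$ lies in the same component of $\mr$ as $S$, exhibiting a short path $P_i$ between them. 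A telescoping argument then finishes: for $g=g_{i_1}^{\pm1}\cdots g_{i_m}^{\pm1}$, translating the paths $P_{i_j}$ by the partial products $g_{i_1}^{\pm1}\cdots g_{i_{j-1}}^{\pm1}$ and concatenating yields a path in $\mr$ from $S$ to $g\cdot S$. Since every vertex of $\mr$ is $g\cdot S$ for some $g$ and some representative $S$, the complex $\mr$ is connected.

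I expect the main obstacle to sit entirely inside Theorem~\ref{finitely}: identifying $\mg(N,\Sigma)$ and controlling how it moves reducing spheres. Carrying this out should require the Bonahon--Otal classification of genus $\le 2$ Heegaard splittings of lens spaces, the known genus-$2$ Goeritz groups, and a Haken-style innermost-disk surgery analysis of the intersection of two reducing spheres; the extra stabilizing handle (genus $3$ rather than $2$) is what makes this delicate, since it may be created or destroyed under the surgeries and must be tracked throughout. Once that structural picture is in hand, the connectivity of $\mr$ reduces to the orbit-by-orbit bookkeeping sketched above.
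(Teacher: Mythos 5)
Your overall strategy — exploit the structure of $\mg(N,\Sigma)$ established in the proof of Theorem~\ref{finitely} and run an orbit--telescoping argument on $\mr$ — is in the same spirit as the paper's proof, which also proceeds by writing a Goeritz element as a word in generators and showing inductively that the successive translates of a basepoint stay in one component of $\mr$. But there are two concrete problems with the way you set it up.

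First, your classification of reducing spheres by ``type'' is incomplete. You assert that every reducing sphere for $\Sigma$ splits $N$ into a punctured $L(p_1,q_1)$ with a genus-$1$ splitting and a punctured $L(p_2,q_2)$ with a stabilized genus-$2$ splitting, giving two types $\mathcal A$, $\mathcal B$. This misses the third type present in the paper's designated triplet: $S_3$ bounds a $3$-ball on its genus-$1$ side, not a punctured lens space. Moreover, your argument that the genus-$2$ piece cut off by $S_A\in\mathcal A$ contains a disjoint reducing sphere ``of the opposite type'' is not correct as stated: the reducing sphere $S'$ of the stabilized genus-$2$ splitting that one produces most naturally separates off the \emph{stabilizing} handle, so on its genus-$1$ side $S'$ bounds a ball and is of the ball type $\mathcal C$, not type $\mathcal B$. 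The types $\mathcal A$, $\mathcal B$, $\mathcal C$ are indeed realized by the pairwise-disjoint triple $(S_1,S_2,S_3)$, so the connectivity among orbit representatives does hold, but via the triplet rather than by the two-step argument you give.

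Second, and more seriously, the step ``for each generator $g_i$ and each orbit representative $S$, one checks that $g_i^{\pm1}\cdot S$ lies in the same component of $\mr$ as $S$'' is left entirely to inspection, yet it is precisely the crux of the argument, and it is not automatic for an arbitrary finite generating set of $\mg(N,\Sigma)$. What makes the paper's argument go through is that Theorem~\ref{th} produces generators lying in $\mh_1\cup\mh_2\cup\mh_3$, i.e.\ each generator fixes one of the curves $\mu_j$, hence fixes the sphere $S_j$; and because the paper telescopes on the entire complete triplet $\T=(S_1,S_2,S_3)$ rather than on a single sphere, at each inductive step the fixed sphere $S_j$ and the current triplet $\T_k$ are both already known to lie in $\mr'$, so applying $\theta_{k+1}\in\mh_j$ (a simplicial automorphism of $\mr$ fixing $S_j$) keeps the translate in $\mr'$. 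If you want to run the argument on single vertices and orbit representatives as you propose, you need to fix a generating set contained in $\bigcup_j\mh_j$, choose representatives $S_1,S_2,S_3$ (which are pairwise disjoint), and use that each generator in $\mh_j$ fixes $S_j$ which is adjacent to the other two representatives. As written, the key verification is a gap.
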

For any reducible Heegaard splitting $V\cup_{\Sigma}W= V_{1}\cup_{\Sigma_{1}}W_{1} \sharp V_{2}\cup_{\Sigma_{2}}W_{2}$, let $\mu=S\cap \Sigma$ be the intersection of the reducing sphere $S$ and the Heegaard surface $\Sigma$.  Although the method in the proof of Theorem 1.1 does not apply in general, it provides insight into  the
widely studied subgroup  $G_{\mu}\leqslant\mg(N,\Sigma)$, the stabilizer of $\mu$, which is a key subgroup of $\mathcal{G}(N, \Sigma)$. By its definition, it is not hard to see that there is a natural homomorphism from $G_\mu$ to $\mg(N_i,\Sigma_i)$ for each $i$. Thus it is of interest to determine whether $G_{\mu}$ is finitely generated(or finitely presented)  when both of those two Georitz groups are finitely generated(or finitely presented). Using standard combinatorial techniques,  we obtain the following result.
\begin{thm} \label{thm2}
If $\mathcal{G}(N_{1}, \Sigma_{1})$ and $\mathcal {G}(N_{2}, \Sigma_{2})$ are both finitely generated(or finitely presented), then so is $G_{\mu}$.
\end{thm}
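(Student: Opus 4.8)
The plan is to exhibit $G_\mu$, up to operations preserving finite generation (resp. finite presentation), as a group extension whose quotient is essentially $\mathcal{G}(N_1,\Sigma_1)\times\mathcal{G}(N_2,\Sigma_2)$ and whose kernel is small and explicit. Fix a separating reducing sphere $S$ with $S\cap\Sigma=\mu$, so that $N=N_1\sharp N_2$. An element of $G_\mu$ preserves the isotopy class of $\mu$, hence --- in this reducible genus-three situation --- that of $S$, hence the two summands up to possibly exchanging them; let $G_\mu^{0}\leqslant G_\mu$ be the subgroup of index at most two of classes not exchanging $N_1$ and $N_2$. Combining the two natural restriction homomorphisms $G_\mu^{0}\to\mathcal{G}(N_i,\Sigma_i)$ mentioned in the introduction (restrict a representative preserving $S$ to the $i$-th side, cap its boundary sphere with a ball, and extend) yields
$$\Psi\colon\ G_\mu^{0}\ \longrightarrow\ \mathcal{G}(N_1,\Sigma_1)\times\mathcal{G}(N_2,\Sigma_2).$$
It then suffices to prove that $\Psi$ is surjective (or merely has finite-index image) and that $\ker\Psi$ is finitely generated (resp. finitely presented), since the class of finitely generated (resp. finitely presented) groups is closed under finite direct products, finite-index subgroups and overgroups, and group extensions.

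Surjectivity comes from running the connected-sum construction backwards. Given $[f_i]\in\mathcal{G}(N_i,\Sigma_i)$, fix a small ball $B_i\subset N_i$ meeting $\Sigma_i$ in a disk with $B_i\cap V_i$ and $B_i\cap W_i$ half-balls; any two such balls are ambient isotopic through structure-preserving diffeomorphisms, so $f_i$ may be isotoped to fix $B_i$ and to be standard on it. Deleting $\mathring B_i$ and regluing the two resulting punctured manifolds along their boundary spheres, matching up the meridian disks and the two halves of $V$ and of $W$, recovers $(N,\Sigma,V,W)$ together with $S$; the $f_i$ glue to some $f\in G_\mu^{0}$, and one checks $\Psi([f])=([f_1],[f_2])$.

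For the kernel, suppose $\Psi([f])$ is trivial. Then, after an isotopy, $f$ is the identity outside a collar $\mathcal{N}(S)\cong S\times[-1,1]$, inside of which it preserves the annulus $A=\Sigma\cap\mathcal{N}(S)$, preserves the two balls $(S\cap V)\times[-1,1]\subset V$ and $(S\cap W)\times[-1,1]\subset W$, and is the identity on $\partial\mathcal{N}(S)$. Its restriction to $A$ is a power $\tau^{k}$ of the Dehn twist about the core; composing with the $k$-th power of the Goeritz element $\tau_\mu$ given by the twist along $\mu$ (which extends over $V$ and $W$ along the disks $S\cap V$ and $S\cap W$), we may assume $f|_A=\mathrm{id}$, so that $f$ restricts on each side to a diffeomorphism of a $3$-ball fixing its boundary. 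By the results of Cerf and Hatcher on $\mathrm{Diff}$ of the $3$-ball (and on spaces of embedded balls in the summands), such a diffeomorphism is isotopic rel $\partial$ to the identity; the isotopies on the two sides agree on $A$ and glue to a structure-preserving isotopy of $f$ to the identity. Hence $\ker\Psi$ is the cyclic group generated by $\tau_\mu$ --- at worst together with finitely many further classes coming from the finite fundamental groups of the lens-space summands, via pushing a ball around a loop --- and is in particular finitely presented. Combining the three facts gives that $G_\mu^{0}$, hence $G_\mu$, is finitely generated (resp. finitely presented) whenever the $\mathcal{G}(N_i,\Sigma_i)$ are.

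I expect the main obstacle to be the kernel computation: pinning down $\ker\Psi$ exactly, which needs the Cerf/Hatcher input together with a careful argument that the isotopies constructed on the two summands patch to a Heegaard-structure-preserving isotopy on $N$, and also care in passing between the stabilizer of the curve $\mu$ and that of a concrete sphere $S$ (controlling $\pi_1$ of the space of relevant reducing spheres near $S$). The surjectivity step, while geometric, is routine, and the final assembly is the ``standard combinatorial'' manipulation of presentations of group extensions, carried out uniformly in the finitely-generated and finitely-presented cases.
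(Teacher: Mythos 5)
Your overall strategy — map $G_\mu$ (or the index-$\leq 2$ subgroup not swapping the summands) onto the product $\mathcal{G}(N_1,\Sigma_1)\times\mathcal{G}(N_2,\Sigma_2)$ and control the kernel — is in spirit the same as the paper's, which builds $G_{\vec\mu}$ as an iterated extension with quotients $\mathcal{G}(N(\mathcal{B}),\Sigma(\mathcal{B}),D_a)$ and $\mathcal{G}(N(\mathcal{B}'),\Sigma(\mathcal{B}'),D_a)$ and kernel $\langle\tilde\tau_\mu\rangle$. Your surjectivity argument for $\Psi$ is fine and matches the paper's.

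The gap is in your kernel computation, and it is a real one, not a detail. From $\Psi([f])=\mathrm{id}$ you may only conclude that the capped-off map $\hat f_i\in\mathrm{Diff}^+(N_i,\Sigma_i)$ is isotopic to the identity through diffeomorphisms preserving $\Sigma_i$; you may \emph{not} conclude that $f$ is isotopic to the identity outside a collar of $S$. The issue is exactly the Birman phenomenon: the isotopy killing $\hat f_i$ is free to move the capping ball $B_i$ around the Heegaard surface, and the obstruction to making it fix $B_i$ pointwise is an element of the kernel of the forgetful map $\mathcal{G}(N_i,\Sigma_i,D_{a})\to\mathcal{G}(N_i,\Sigma_i)$. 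That kernel is not governed by $\pi_1(N_i)$, as you suggest (``finite fundamental groups of the lens-space summands''), but by $\pi_1(\Sigma_i)$ — pushing the marked disk along loops \emph{in the Heegaard surface}, and such pushes extend over both handlebodies. For $\Sigma_i$ of genus $1$ or $2$ this is an infinite group, so your claim that $\ker\Psi$ is $\langle\tau_\mu\rangle$ plus finitely many further classes is false. The Cerf/Hatcher input you invoke addresses the wrong comparison: it would be relevant if you had already reduced to a diffeomorphism supported in a ball rel boundary, but you cannot get there without first accounting for the push subgroup.

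Fortunately the error does not doom the strategy, because $\pi_1(\Sigma_i)$ is finitely presented; this is precisely why the paper inserts a Birman exact sequence
\[
1\to\pi_1\!\left(\Sigma(\mathcal{B})\right)\to\mathcal{G}\!\left(N(\mathcal{B}),\Sigma(\mathcal{B}),D_{a}\right)\to\mathcal{G}\!\left(N(\mathcal{B}),\Sigma(\mathcal{B})\right)\to 1
\]
(and likewise for $\mathcal{B}'$) before applying closure of finite generation/presentation under extensions. Once you replace your too-small kernel description by the correct Birman kernels on each side, your argument collapses into the paper's chain of three exact sequences and goes through. So: fix the kernel step by routing through the marked-disk Goeritz groups and Birman, and the rest of your proof is sound.
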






 This paper is organized as follows. We introduce some notations in Section \ref{sec2} and study two classes of automorphisms, \emph{eyeglass twist} and \emph{visional bubble move}, in Section 3. And then we prove Theorem  \ref{finitely} and \ref{thm2} in Section 5.

{\bf Acknowledgement.} We would like to thank Professor  Ruifeng Qiu and Chao Wang for many helpful suggestions. This work was partially supported by NSFC 12131009, 12471065 and in part by Science and Technology Commission of Shanghai Municipality (No. 22DZ2229014).

\section{Preliminaries}
\label{sec2}
Throughout the paper, we denote the isotopy class of a curve $\mu$(resp. a diffeomorphism $f$) by $\bar{\mu}$(resp. $\bar{f}$). From now on, we assume that $N$ is the connected sum of two lens spaces unless otherwise specified, and that $V\bigcup_{\Sigma}W$  a genus 3 \emph{Heegaard splitting} of $N$. Let $\dd(N,\Sigma)$  be a subgroup of $\dd(N)$ defined as follows:
 $$\dd(N,\Sigma)\defeq\left\{f\in\dd(N): f(\Sigma)=\Sigma\text{ and }f\text{ preserves the orientation of }\Sigma\right\}.$$ It is well known that if an orientation-preserving diffeomorphism of $N$ preserves the \emph{Heegaard splitting} of $N$, it must preserves the orientation of $\Sigma$. Hence, the natural homomorphism $\rho_{1}: \dd(N,\Sigma)\rightarrow\mg\left(N,\Sigma\right)$ is an epimorphism.
\begin{defn}
	Two reducing spheres $S_1,S_2$(for $\Sigma$) are isotopic if there is an isotopy $$H_t(0\leq t\leq 1):\left(N,\Sigma\right)\rightarrow\left(N,\Sigma\right)$$ 
	so that $H_0=\mathrm{id}\text{ and } H_1 \left(S_1\right)=S_2$.
\end{defn}
\begin{defn}
	A triplet $\T=\left(S_1,S_2,S_3\right)$ of pairwise non-isotopic reducing spheres for $\Sigma$ is called a sphere triplet(for $\Sigma$), and spheres $S_i$ are called the  components of $\T$. We say the triplet is complete if the reducing spheres are pairwise disjoint.
\end{defn}
\begin{defn}
	Two sphere triplets $\T_1,\T_2$ are isotopic if there is an isotopy $$H_t(0\leq t\leq 1):\left(N,\Sigma\right)\rightarrow\left(N,\Sigma\right)$$ 
	so that $H_0=\mathrm{id}\text{ and } H_1 \left(\T_1\right)=\T_2$.
\end{defn}
\begin{note}
	We usually make no notational distinction between triplets and their isotopy classes when the context is clear.
\end{note}

\begin{defn}
	Two sphere triplets $\T_1,\T_2$ are congruent if they differ  by a permutation. For instance,  $\left(S_1,S_2,S_3\right)$ is congruent to $\left(S_3,S_1,S_2\right)$.
\end{defn}

We designate a complete sphere triplet $\T=\left(S_1,S_2,S_3\right)$(for $\Sigma$), as depicted in Figure \ref{hf}, such that (1) $S_i \left(i=1,2\right)$ cuts off a genus $1\;Heegaard\; splitting$ of $M_i\setminus B^3$; (2) $S_3$ cuts off a genus $1\;Heegaard\; splitting$ of a 3-ball. Clearly, $S_1\text{ and }S_2$ are two reducible  2-spheres and cobound $S^2\times I$ in $N$. We also denote $ \mu_i=S_i\cap \Sigma$, for $i=1,2,3$.
\begin{lem}\label{transitive}
   Up to congruences, $\mg\left(N, \Sigma\right)$  acts transitively on the set of isotopy classes of complete sphere triplets for $\Sigma$. 
\end{lem}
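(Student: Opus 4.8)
The plan is to peel the triplet apart one sphere at a time, following the prime summands $M_1$, $M_2$ and the stabilizing $S^3$, thereby reducing the transitivity to a short sequence of simpler transitivity statements. Throughout, for a complete sphere triplet $\mathcal{T}'=\left(S_1',S_2',S_3'\right)$ I write $\mu_i'=S_i'\cap\Sigma$.

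First I would pin down the combinatorial type of an arbitrary complete sphere triplet. Any reducing sphere $S$ for $\Sigma$ meets $\Sigma$ in a single essential curve $\mu$ which is necessarily separating (its two sides are the punctured Heegaard surfaces of the two summands), so it cuts $\Sigma$ into subsurfaces of genus $1$ and $2$; correspondingly $S$ splits off a genus-one Heegaard splitting of a prime summand $X_S\in\left\{M_1,M_2,S^3\right\}$. I claim that, as a multiset, $\left\{X_{S_1'},X_{S_2'},X_{S_3'}\right\}=\left\{M_1,M_2,S^3\right\}$. Indeed, two disjoint reducing spheres cannot split off the same summand: their genus-one pieces would be either nested, forcing one sphere to be a reducing sphere inside an (irreducible) genus-one Heegaard splitting, which is absurd, or disjoint, which either forces $g(\Sigma)\geq 4$ or produces a repeated prime factor that $N=M_1\sharp M_2$ does not have. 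Hence, after a congruence we may assume $S_1'$ splits off a genus-one Heegaard splitting of $M_1\setminus B^3$, $S_2'$ one of $M_2\setminus B^3$, and $S_3'$ one of a $3$-ball; that is, $\mathcal{T}'$ has exactly the combinatorial type of the designated $\mathcal{T}$. (The nontrivial permutation is needed only when $M_1\cong M_2$.)

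Step 1, the crux, is to move $S_1'$ to $S_1$ by an element of $\mathcal{G}\left(N,\Sigma\right)$, i.e.\ to prove that $\mathcal{G}\left(N,\Sigma\right)$ acts transitively on the reducing spheres that split off a genus-one Heegaard splitting of $M_1\setminus B^3$. I would run the standard complexity-reduction scheme on $\left|\mu_1'\cap\mu_1\right|$: if this is $0$, then $S_1'$ and $S_1$ can be isotoped disjoint, and the region they cobound, squeezed between two copies of ``punctured $M_1$'', carries a genus-zero Heegaard splitting, hence is $S^2\times I$ meeting $\Sigma$ in a product annulus, so $S_1'$ is isotopic to $S_1$ in $\left(N,\Sigma\right)$; if it is positive, an innermost-disk argument on the meridian disks of $S_1'$ and $S_1$ in $V$ and $W$ yields an eyeglass whose associated eyeglass twist from Section 3 lies in $\mathcal{G}\left(N,\Sigma\right)$, keeps $S_1'$ an $M_1$-splitting sphere, and strictly decreases the intersection number. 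Iterating reaches $S_1$, and the uniqueness of the genus-one Heegaard splittings of the lens space $M_1$ (Bonahon--Otal) and of $S^3$ (Waldhausen) is what lets the last step close up as an isotopy. The hard part here, and the principal obstacle of the whole proof, is excluding the ``stuck'' positions in which no innermost disk produces a legitimate eyeglass twist; this requires a careful case analysis of how two genus-one splitting curves can meet on the genus-three surface $\Sigma$.

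Steps 2 and 3 peel off $M_2$ and the stabilization. Once $S_1'=S_1$ is fixed, $S_2'$ and $S_3'$ lie in the complementary piece $X$ cut off by $S_1$, which is a genus-two Heegaard splitting of $M_2$; since any self-diffeomorphism of $\left(X,\Sigma\cap X\right)$ fixing $\partial X$ extends by the identity over the ``punctured $M_1$'' side to an element of $\mathcal{G}\left(N,\Sigma\right)$ fixing $S_1$, it suffices to act by the Goeritz group of the genus-two splitting of $M_2$, whose transitivity on its reducing spheres is the same complexity-reduction argument as Step 1 but with one fewer summand (and is implicit in the genus-two analysis of \cite{cho_mapping_2019}, again via Bonahon--Otal). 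This carries $S_2'$ to $S_2$. Finally $S_3'$ now lies in the product region $Y\cong S^2\times I$ between $S_1$ and $S_2$ containing the fixed stabilization bubble, and $\mu_3'$ and $\mu_3$ are separating curves on the twice-punctured torus $\Sigma\cap Y$ each cutting off its genus-one part; such curves lie in a single orbit of the mapping class group of $\Sigma\cap Y$ rel boundary, which is generated by twists realized by the visional bubble moves and eyeglass twists of Section 3, so a further element of $\mathcal{G}\left(N,\Sigma\right)$ fixing $S_1$ and $S_2$ carries $S_3'$ to $S_3$, completing the proof. (Once the combinatorial type is fixed, one can alternatively argue from the uniqueness of the prime decomposition of $N$ together with the uniqueness of the Heegaard splittings of the prime pieces and the triviality of the mapping class group of a sphere rel a curve; I would nonetheless write out the peeling argument, since a general ``uniqueness of connected-sum decompositions of Heegaard splittings'' is not available and one must exploit the smallness of the present situation.)
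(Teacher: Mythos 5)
Step 1, the step you yourself call the crux, is left incomplete precisely where it matters: you acknowledge that the intersection-reduction loop on $\left|\mu_1'\cap\mu_1\right|$ hinges on ``excluding the `stuck' positions in which no innermost disk produces a legitimate eyeglass twist,'' and that case analysis is not supplied. Without it there is no guarantee that the complexity strictly decreases, so the peeling strategy does not close; Steps 2 and 3 inherit the same issue, being ``the same complexity-reduction argument'' one genus down. It is also unclear in the sketch why an innermost-disk surgery produces an \emph{eyeglass} in the sense of Section~\ref{sec3} rather than merely an isotopy of $S_1'$.

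Beyond the gap, the whole apparatus is unnecessary for this lemma, and the route you consider and set aside in your final parenthesis is exactly the proof the paper gives. Once your opening paragraph fixes the combinatorial type of $\mathcal{T}'$, the union $S_1'\cup S_2'\cup S_3'$ cuts $(N,\Sigma)$ into four pairs: a once-punctured $M_1$ with a genus-one splitting, a once-punctured $M_2$ with a genus-one splitting, a once-punctured $3$-ball with a genus-one splitting, and a thrice-punctured $S^3$ with a genus-zero splitting. These match, piece by piece, the four pairs cut out by $\mathcal{T}$, by uniqueness of the prime decomposition together with the Waldhausen and Bonahon--Otal uniqueness of genus-zero and genus-one Heegaard splittings of $S^3$, the $3$-ball, and lens spaces; and since the mapping class group of the pair $(S^2,S^1)$ is trivial, the piecewise diffeomorphisms glue along the sphere boundaries to an element of $\operatorname{Diff}^{+}(N,\Sigma)$ carrying $\mathcal{T}$ to a congruent of $\mathcal{T}'$. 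No general ``uniqueness of connected-sum decompositions of Heegaard splittings'' is needed -- the piecewise uniqueness just invoked is all that enters -- so your stated reason for rejecting this route does not apply. The eyeglass-twist and bubble-move machinery you import here belongs to the proof of Theorem~\ref{th}, not to this lemma, which has a short cut-and-reglue proof.
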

\begin{proof}
    It suffices to show that for any given triplet $\T'=\left(S'_1,S'_2,S'_3\right)$, there exists a diffeomorphism $h\in\dd(N,\Sigma)$ so that $h(\T)$ is congruent to $\T'$. Since $\bigcup_{i=1}^{3} S'_i$ divides $N$ into four regions, each of the same diffeomorphism type as the four regions divided by $\bigcup_{i=1}^{3} S_i$, we can glue all diffeomorphisms of these four regions to obtain $h$. 
\end{proof}

Using similar arguments, we can prove the following lemma.
\begin{lem}\label{stab}
    If $S$ is a common component of these two complete sphere triplets $\T$ and $\T'$, then there exists a diffeomorphism $h\in\dd(N,\Sigma)$ so that $h(\T)=\T'$ and $h(S)=S$.
\end{lem}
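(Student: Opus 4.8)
The plan is to follow the proof of Lemma~\ref{transitive} essentially verbatim, but to arrange the decomposition of $N$ around the common sphere $S$ so that one of the four complementary pieces can be left fixed.

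First I would record the structure of a complete triplet, just as in the proof of Lemma~\ref{transitive}. Every reducing sphere for $\Sigma$ meets $\Sigma$ in a separating curve bounding a once-punctured torus, and a short Euler characteristic count shows that the three curves of a complete sphere triplet bound three disjoint once-punctured tori with thrice-punctured-sphere complement. Hence such a triplet cuts $N$ into three pieces carrying genus-$1$ Heegaard splittings --- one of $M_1\setminus B^3$, one of $M_2\setminus B^3$, and one of a $3$-ball --- together with one standard genus-$0$ piece $\mathbb{P}$ (an $S^3$ with three open balls removed, with its trivial Heegaard splitting), whose three boundary spheres are the components of the triplet. Since the genus-$1$ Heegaard splittings of lens spaces, of $S^3$, and of $B^3$ are irreducible, each genus-$1$ piece is incident to exactly one component. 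Applying this to both $\T$ and $\T'$: the common sphere $S$ separates $N$ into a genus-$1$ side $N^{+}$ (one of $M_1\setminus B^3$, $M_2\setminus B^3$, or a $3$-ball, with its genus-$1$ splitting) and a genus-$2$ side $N^{-}$; the other two components of $\T$, and the other two components of $\T'$, all lie in $N^{-}$; and $N^{+}$ is literally one of the complementary pieces for each of $\T$ and $\T'$.

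Next I would build $h$ piece by piece as in Lemma~\ref{transitive}, but keeping $S$ fixed. Set $h|_{N^{+}}=\mathrm{id}$. Inside $N^{-}$ the other two components of $\T$, and likewise of $\T'$, form complete ``sphere pairs'' for the genus-$2$ Heegaard splitting that $N^{-}$ inherits; each such pair cuts $N^{-}$ into two pieces carrying genus-$1$ splittings (one of a lens space minus a ball, one of a $3$-ball) and one copy of $\mathbb{P}$, one of whose three boundary spheres is $\partial N^{-}=S$, and the same list of pieces results from either pair. Using uniqueness of the genus-$1$ Heegaard splittings of $M_1,M_2$ and of $S^3$, pick orientation-preserving diffeomorphisms between the matching genus-$1$ pieces respecting the Heegaard surface; then, using the flexibility of the standard genus-$0$ piece $\mathbb{P}$ (its mapping class group realizes the relevant boundary data, in particular the identity on the boundary sphere $S$), pick a diffeomorphism of the $\mathbb{P}$-piece agreeing with those on the two spheres coming from $\T$ and equal to the identity on $S$. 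Gluing these yields $h^{-}\colon N^{-}\to N^{-}$ with $h^{-}|_{S}=\mathrm{id}$, $h^{-}(\Sigma\cap N^{-})=\Sigma\cap N^{-}$, carrying the $\T$-components in $N^{-}$ to those of $\T'$; since it agrees with $\mathrm{id}_{N^{+}}$ along $S$, the two maps glue to $h\in\dd(N,\Sigma)$ with $h(S)=S$ and $h(\T)=\T'$.

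The real content lies in the two inputs that already power Lemma~\ref{transitive}: the classification and matching of the complementary pieces of a complete triplet (resp.\ pair), which rests on the classical irreducibility and uniqueness of the genus-$1$ Heegaard splittings of lens spaces and $S^3$; and the homogeneity of the standard genus-$0$ piece $\mathbb{P}$, which is exactly what permits the chosen diffeomorphism to restrict to the identity on the distinguished sphere $S$ and hence to extend trivially over $N^{+}$. The remaining points --- matching orientations and the curves cut out on $\Sigma$ along the gluing spheres, and verifying that the glued $h$ lies in $\dd(N,\Sigma)$ --- are routine, since every orientation-preserving diffeomorphism of $S^2$ is isotopic to the identity and the curve $\mu_S=S\cap\Sigma$ is preserved at every stage.
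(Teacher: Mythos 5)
Your proposal is correct and takes essentially the same approach the paper intends: the paper's one-line proof says only ``using similar arguments'' to Lemma~\ref{transitive} (cut along the triplet into four standard pieces and glue piecewise diffeomorphisms), and you have fleshed that out, adding the one new point actually needed here --- arranging the piecewise gluing so that the piece meeting $S$ on the genus-one side is carried by the identity, and choosing the map on the $\mathbb{P}$-piece to restrict to the identity on $S$, which is what makes $h(S)=S$ hold.
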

\begin{figure}[h]
	\includegraphics[width=0.5\textwidth]{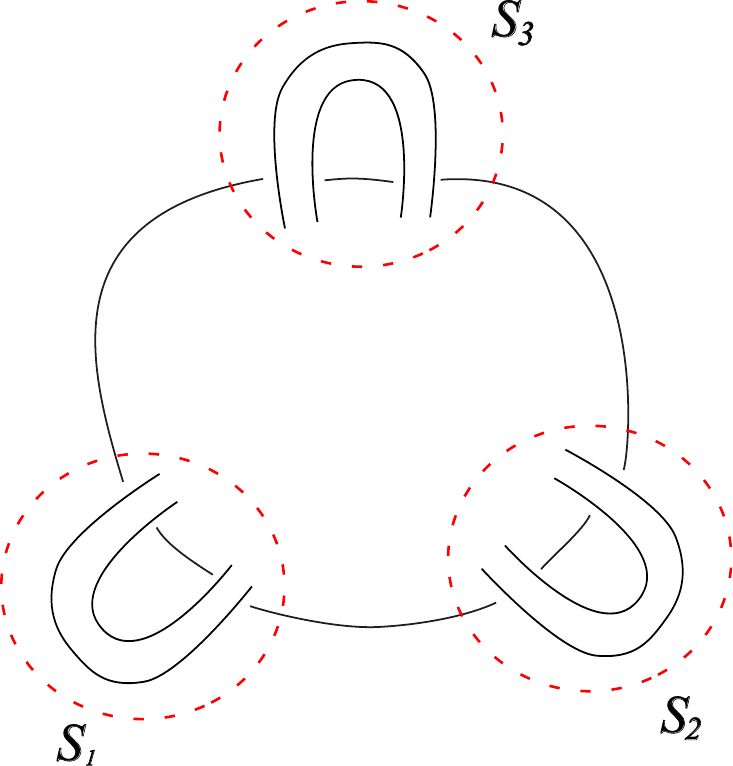}
	\caption{Heegaard surface $\Sigma$ and triplet $\T$}\label{hf}
\end{figure}

\section{Eyeglass Twist and Bubble Move}
\label{sec3}

A \emph{Heegaard splitting} $N=A\cup_{\Sigma}B$ is weakly reducible if there are two disjoint properly embedded essential disks, $a\text{ and } b$ in $A\text{ and }B$ respectively. We call $(a,b)$ a weakly reducing pair for $\Sigma$. An eyeglass is a triple $(a,b, \lambda)$,  where $\left(a,b\right)$ is a weakly reducing pair for $\Sigma$ and $\lambda\subset \Sigma$  is an arc connecting $a$ and $b$ with its interior disjoint from them.  For  such a triple $\left(a,b,\lambda\right)$, we refer to $\left(a,b\right)$ as the lenses of $\eta$ and $\lambda$ as the bridge of $\eta$. Given a normal direction $\vec{n}$ pointing toward the interior of $B$, we can push the 1-handle $a\times I$ around the circumference of disk $b$ in a counterclockwise direction as in Figure \ref{et}. Actually, we have described an excursion of the handlebody $A$ that ends at initial position. More formally, an eyeglass $\eta$ defines a natural  automorphism $T_{\eta}: (N, \Sigma) \rightarrow(N, \Sigma)$, known as the \emph{positive eyeglass twist}. The inverse of this operation, which involves a clockwise excursion of $A$, is referred to as the \emph{negative eyeglass twist} and denoted $T_{\overline{\eta}}$. For an eyeglass twist $T_\eta$, the eyeglass $\eta$ is referred to as its base eyeglass. It is not hard to see that an eyeglass twist preserves the isotopy classes of its lenses. 
\begin{note}
	The above definition does not depend on the order of the two lenses of an eyeglass. In other words, if $\eta=(a,b,\lambda)$ and $\eta'=(b,a,\lambda)$, then we have  $T_\eta=T_{\eta'}$.
\end{note}
\begin{figure}[ht]
	\includegraphics[width=0.5\textwidth]{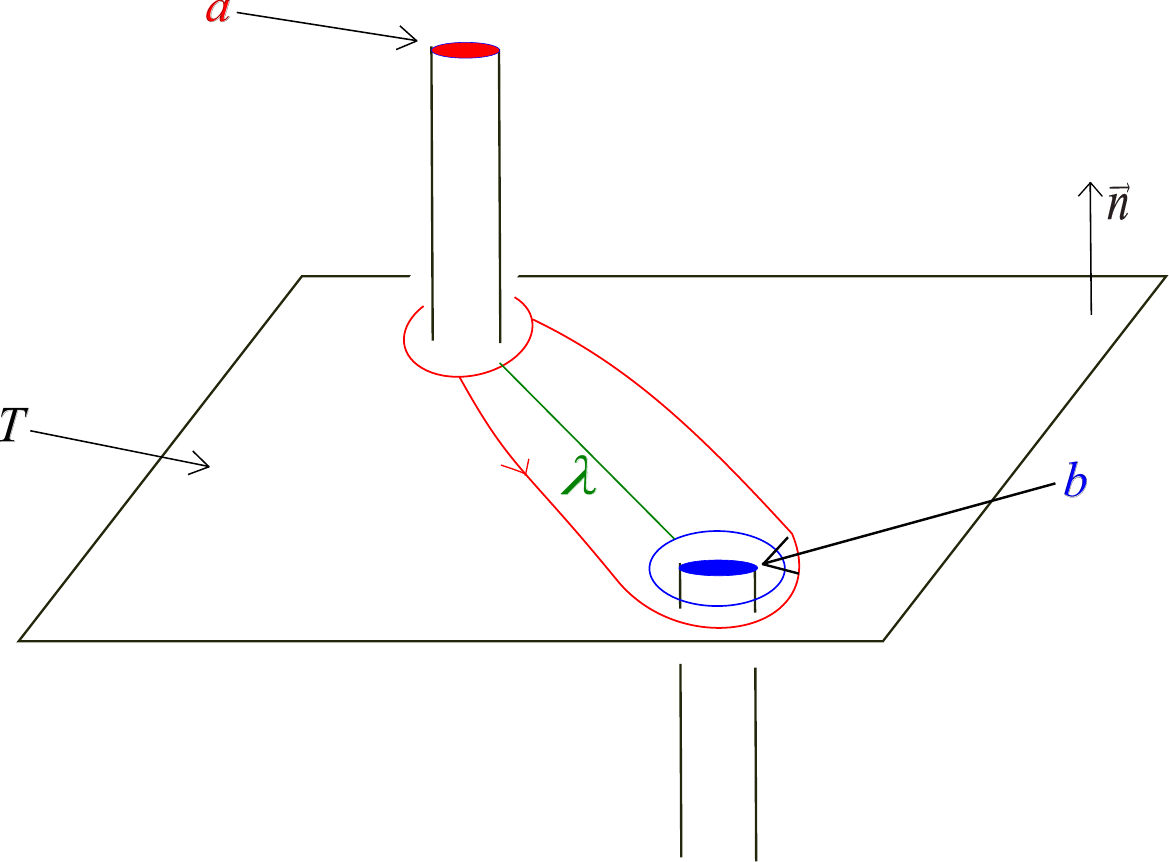}
	\caption{eyeglass twist}\label{et}
\end{figure}

Let $\eta=\left(a,b,\lambda\right)$ be an eyeglass, where $\alpha\defeq\partial a,\beta\defeq\partial b$, and $\Delta$ a regular neighborhood of $\alpha \cup \lambda \cup \beta$ in surface $\Sigma$. Denote by $\gamma$ the component(as in Figure \ref{eyeglass}.) of $\partial \Delta$, which is isotopic to neither $\alpha$ nor $\beta$. We can now describe the above situation as follows:
\begin{center}
	$T_\eta=\tau_\alpha\cdot\tau_\beta\cdot\tau_\gamma^{-1}$,
\end{center}
where $\tau_{[\cdot]}$ denotes the \emph{left-handed Dehn twist}; see more details in \cite[Lemma 2.5]{zupan_powell_2020}.
\begin{rmk}
	Although different choices of regular neighborhoods of the eyeglass $\eta$ yield different eyeglass twists, they are all equivalent up to isotopy. Therefore, for the eyeglass $\eta$, we obtain two eyeglass twists $T_{\eta},T_{\overline{\eta}}\in \mathcal{G}(N,\Sigma)$. 
\end{rmk}
\begin{figure}[h]
	\includegraphics[width=0.5\textwidth]{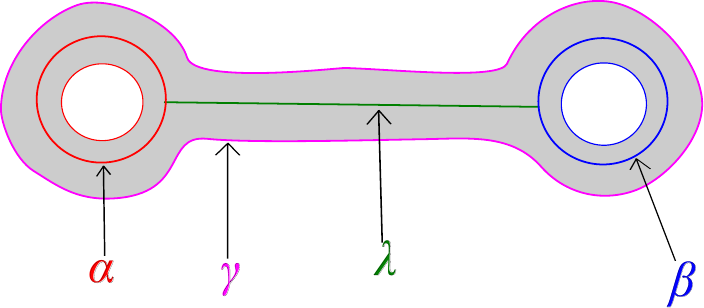}
	\caption{regular neighborhood $\Delta$}\label{eyeglass}
\end{figure}
\begin{defn}
	Suppose $\eta_1\text{ and }\eta_2$ are two eyeglasses  in $N$. They are isotopic if there is an isotopy $H_{t}(0\leq t\leq 1): (N,\Sigma)\rightarrow (N,\Sigma)$ such that $H_{0}=\mathrm{id} \text{ and } H_{1}(\eta_1)=\eta_2$. Futhermore, the isotopy class of an eyeglass $\eta$ is denoted by $[\eta]$.
\end{defn}

	It is not hard to see that the eyeglass twist $T_{\eta}$ depends only on the isotopy class of $\eta$. In analogy to the case for \emph{Dehn twists}, we have the following lemma.
\begin{lem}\label{conju}
Given any $\varphi \in \mathcal{G}(N, \Sigma)$, we have $T_{\varphi(\eta)}=\varphi \cdot T_{\eta} \cdot \varphi^{-1}$.

\end{lem}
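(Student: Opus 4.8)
The plan is to reduce the identity to the factorization $T_\eta=\tau_\alpha\cdot\tau_\beta\cdot\tau_\gamma^{-1}$ into left-handed Dehn twists recorded above and then apply the classical change-of-coordinates rule for Dehn twists. First, note that $\varphi\in\mathcal{G}(N,\Sigma)$ is represented by an orientation-preserving diffeomorphism of $N$ which preserves $\Sigma$ together with its orientation and carries the set $\{V,W\}$ of the two handlebodies to itself. Hence $\varphi(a)$ and $\varphi(b)$ are again essential disks, one in each handlebody, and $\varphi(\lambda)\subset\Sigma$ is an arc joining them with interior disjoint from them; so $\varphi(\eta)=(\varphi(a),\varphi(b),\varphi(\lambda))$ is an eyeglass (should $\varphi$ interchange the handlebodies, the two lenses appear in the opposite order, which is immaterial by the Note following the definition of the eyeglass twist). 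Writing $\alpha=\partial a$, $\beta=\partial b$ and letting $\Delta$ be the regular neighborhood of $\alpha\cup\lambda\cup\beta$ used to form $T_\eta$, with distinguished boundary curve $\gamma$, the image $\varphi(\Delta)$ is a regular neighborhood of $\varphi(\alpha)\cup\varphi(\lambda)\cup\varphi(\beta)$; since $\varphi$ preserves parallelism of curves, the boundary component of $\varphi(\Delta)$ that is isotopic to neither $\varphi(\alpha)$ nor $\varphi(\beta)$ is exactly $\varphi(\gamma)$. Thus $\varphi(\gamma)$ plays for $\varphi(\eta)$ the role $\gamma$ plays for $\eta$, and therefore $T_{\varphi(\eta)}=\tau_{\varphi(\alpha)}\cdot\tau_{\varphi(\beta)}\cdot\tau_{\varphi(\gamma)}^{-1}$.

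Next I would carry out the computation inside $\mathcal{G}(N,\Sigma)$. Inserting $\varphi^{-1}\varphi$ between consecutive factors of $T_\eta=\tau_\alpha\cdot\tau_\beta\cdot\tau_\gamma^{-1}$ gives
\begin{equation*}
\varphi\cdot T_\eta\cdot\varphi^{-1}=\bigl(\varphi\cdot\tau_\alpha\cdot\varphi^{-1}\bigr)\bigl(\varphi\cdot\tau_\beta\cdot\varphi^{-1}\bigr)\bigl(\varphi\cdot\tau_\gamma\cdot\varphi^{-1}\bigr)^{-1}.
\end{equation*}
Because $\varphi$ restricts to an orientation-preserving diffeomorphism of $\Sigma$, the standard conjugation formula for Dehn twists yields $\varphi\cdot\tau_c\cdot\varphi^{-1}=\tau_{\varphi(c)}$ for every essential simple closed curve $c\subset\Sigma$, with handedness preserved. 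Applying this for $c=\alpha,\beta,\gamma$ and then using the factorization of $T_{\varphi(\eta)}$ obtained above,
\begin{equation*}
\varphi\cdot T_\eta\cdot\varphi^{-1}=\tau_{\varphi(\alpha)}\cdot\tau_{\varphi(\beta)}\cdot\tau_{\varphi(\gamma)}^{-1}=T_{\varphi(\eta)},
\end{equation*}
which is the assertion. All equalities here are equalities of classes in $\mathcal{G}(N,\Sigma)$, viewed as a subgroup of the mapping class group of $\Sigma$; since the factorization of an eyeglass twist and the conjugation rule for Dehn twists already hold in $\mathrm{MCG}(\Sigma)$, the conclusion descends to $\mathcal{G}(N,\Sigma)$ without extra work.

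Alternatively, one may bypass the Dehn twist factorization and argue geometrically: $T_\eta$ is produced by an explicit counterclockwise excursion of the handlebody $A$ determined by $\eta$ together with a choice of normal direction pointing into the other handlebody, and $\varphi\circ(\,\cdot\,)\circ\varphi^{-1}$ simply transports this entire construction by $\varphi$; because $\varphi$ preserves the orientations of both $N$ and $\Sigma$, it preserves the normal direction and the counterclockwise sense of the excursion, so the transported excursion is precisely the one defining $T_{\varphi(\eta)}$ (a clockwise one would instead produce $T_{\overline{\varphi(\eta)}}$). I expect the only delicate point to be this orientation bookkeeping — verifying that positive eyeglass twists are sent to positive, not negative, ones, and that a handlebody-interchanging $\varphi$ causes no trouble — while the remainder is just the routine change-of-coordinates principle.
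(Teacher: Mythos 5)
The paper itself gives no proof of this lemma; it is stated immediately after the remark ``In analogy to the case for Dehn twists'' and left to the reader. Your argument supplies exactly the proof that remark is hinting at, and it is correct: you combine the factorization $T_\eta=\tau_\alpha\cdot\tau_\beta\cdot\tau_\gamma^{-1}$ recorded in the paper (from Zupan's Lemma~2.5) with the standard change-of-coordinates formula $\varphi\tau_c\varphi^{-1}=\tau_{\varphi(c)}$, and you correctly observe that $\varphi(\gamma)$ is the distinguished boundary curve of $\varphi(\Delta)$, so that the same factorization holds for $\varphi(\eta)$. Your alternative geometric argument via transporting the excursion is also a valid way to see the identity.

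One small point of hygiene: the parenthetical worry about $\varphi$ interchanging the two handlebodies is a false alarm under the paper's conventions. In Section~2 the paper represents elements of $\mathcal{G}(N,\Sigma)$ by diffeomorphisms in $\dd(N,\Sigma)$, which are required to preserve the orientation of $\Sigma$; since an orientation-preserving diffeomorphism of $N$ that preserves the orientation of $\Sigma$ must preserve the coorientation and hence each handlebody, the case $\varphi(V)=W$ never arises. This is in fact necessary for your main computation, because if $\varphi|_\Sigma$ were orientation-reversing the conjugation rule would read $\varphi\tau_c\varphi^{-1}=\tau_{\varphi(c)}^{-1}$ and the argument would produce $T_{\overline{\varphi(\eta)}}$ instead; so it is good that you flagged the orientation bookkeeping as the delicate point, but with the paper's definition there is nothing further to check.
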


When a lens of an eyeglass $\eta$ is decomposed into two disks, the corresponding eyeglass twist $T_{\eta}$ can be expressed as the composition of two new eyeglass twists. We write it as the following lemma. 
\begin{lem}\cite[Figure 8]{freedman_powell_2018}\label{compo}
	Let $\eta=(a,b ,\lambda)$ be an eyeglass in $N$. If $b$ is the band sum of two disks $b_1$ and $b_2$ along arc $\iota$ such that $\beta_i=\partial b_i$ is disjoint from both the bridge $\lambda$ and the disk $a$, then for $i=1,2$, we choose an proper arc $\lambda_{i}$ which connects $\alpha$ and $\beta_{i}$ and is disjoint from $\iota$, in the planar surface bounded by  $\alpha,\beta_1,\beta_2$ and $\gamma$.  Then we obtain two new eyeglasses, such as $\eta_1\left(=\left(a,b_{1},\lambda_{1}\right)\right)$ and $\eta_2\left(=\left(a,b_{2},\lambda_{2}\right)\right)$. Moreover, $T_{\eta}=T_{\eta_1} \cdot T_{\eta_2}.$ 
\end{lem}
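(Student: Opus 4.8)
The plan is to reduce the identity $T_\eta=T_{\eta_1}\cdot T_{\eta_2}$ to the lantern relation on a suitable $4$-holed sphere in $\Sigma$, using the Dehn-twist factorization of eyeglass twists recalled above. Set $\alpha=\partial a$, $\beta=\partial b$, $\beta_i=\partial b_i$, and let $\gamma,\gamma_1,\gamma_2$ be the ``outer'' boundary curves of regular neighborhoods of $\alpha\cup\lambda\cup\beta$, of $\alpha\cup\lambda_1\cup\beta_1$, and of $\alpha\cup\lambda_2\cup\beta_2$, so that $T_\eta=\tau_\alpha\tau_\beta\tau_\gamma^{-1}$ and $T_{\eta_i}=\tau_\alpha\tau_{\beta_i}\tau_{\gamma_i}^{-1}$.

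First I would pin down the relevant subsurface. Let $P\subset\Sigma$ be the planar surface bounded by $\alpha,\beta_1,\beta_2,\gamma$ from the statement; it is a $4$-holed sphere. Since each $\lambda_i$ lies in $P$ and misses $\iota$, it is determined up to isotopy as the arc in $P$ from $\alpha$ to $\beta_i$ disjoint from $\iota$, so $\eta_1,\eta_2$ are well defined; and a direct check shows that $\beta,\gamma_1,\gamma_2$ are exactly the three isotopy classes of essential non-peripheral simple closed curves of $P$, with $\beta$ separating $\{\beta_1,\beta_2\}$ from $\{\alpha,\gamma\}$, with $\gamma_1$ separating $\{\alpha,\beta_1\}$ from $\{\beta_2,\gamma\}$, and with $\gamma_2$ separating $\{\alpha,\beta_2\}$ from $\{\beta_1,\gamma\}$.

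The second step is the computation. The curves $\alpha,\beta_1,\beta_2,\gamma$ are pairwise disjoint (they bound $P$; note $\alpha\cap\beta_i=\varnothing$ since $\beta_i\cap a=\varnothing$ by hypothesis, and $\alpha\cap\beta=\varnothing$ since $(a,b)$ is a weakly reducing pair), so their Dehn twists commute; and each of $\gamma_1,\gamma_2$ cobounds a pair of pants in $\Sigma$ with two of these curves, hence is disjoint from all four, so $\tau_{\gamma_1}$ and $\tau_{\gamma_2}$ each commute with $\tau_\alpha,\tau_{\beta_1},\tau_{\beta_2},\tau_\gamma$. Writing the lantern relation on $P$ in the cyclic order of its interior curves determined by the band-sum configuration, it reads $\tau_\alpha\tau_{\beta_1}\tau_{\beta_2}\tau_\gamma=\tau_{\gamma_1}\tau_\beta\tau_{\gamma_2}$, and rearranging with the above commutations gives $\tau_\alpha\tau_{\beta_1}\tau_{\beta_2}\tau_{\gamma_1}^{-1}\tau_{\gamma_2}^{-1}=\tau_\beta\tau_\gamma^{-1}$. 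Hence
\[
T_{\eta_1}\cdot T_{\eta_2}=\bigl(\tau_\alpha\tau_{\beta_1}\tau_{\gamma_1}^{-1}\bigr)\bigl(\tau_\alpha\tau_{\beta_2}\tau_{\gamma_2}^{-1}\bigr)
=\tau_\alpha\bigl(\tau_\alpha\tau_{\beta_1}\tau_{\beta_2}\tau_{\gamma_1}^{-1}\tau_{\gamma_2}^{-1}\bigr)
=\tau_\alpha\tau_\beta\tau_\gamma^{-1}=T_\eta .
\]

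I expect the main obstacle to be orientation/ordering bookkeeping rather than anything conceptual. The lantern relation is sensitive to the cyclic order of its three interior curves, and eyeglass twists are built from left-handed Dehn twists with a fixed sign convention (the distinction between $T_\eta$ and $T_{\overline{\eta}}$); one must therefore fix a consistent picture --- the one in Figure 8 of \cite{freedman_powell_2018} --- and verify that, with $\lambda_1,\lambda_2$ chosen disjoint from $\iota$ (hence from each other), the curves $\gamma_1,\gamma_2$ occur in the lantern relation precisely with the signs used above, and in particular that the order of $\eta_1,\eta_2$ on the right-hand side is the one induced by the configuration. Alternatively one can avoid the algebra entirely: the excursion of the handlebody $A$ realizing $T_\eta$ pushes the $1$-handle $a\times I$ once around $\beta$, and since $\beta$ is the band sum of $\beta_1$ and $\beta_2$ this excursion splits into the excursion around $\beta_1$ given by $\eta_1$ followed by the excursion around $\beta_2$ given by $\eta_2$, yielding $T_\eta=T_{\eta_1}\cdot T_{\eta_2}$ directly --- which is what Figure 8 of \cite{freedman_powell_2018} depicts.
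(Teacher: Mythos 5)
Your proposal is correct, and it offers something the paper does not: an honest algebraic proof. The paper's own ``proof'' consists of a single sentence referring the reader to Figure~\ref{com}, which depicts the handle-excursion picture; your closing paragraph (pushing $a\times I$ once around $\beta$ splits into an excursion around $\beta_1$ followed by one around $\beta_2$, since $\beta$ is the band sum of $\beta_1$ and $\beta_2$) is precisely what that figure shows, so that part of your argument is the paper's argument. Your primary argument via the lantern relation on the $4$-holed sphere $P$ is a genuinely different route, and it is a real gain: it converts the claim into a relation in $\mathrm{Mod}(P)$ pushed forward to $\mathrm{Mod}(\Sigma)$ and so does not depend on trusting a cartoon of an isotopy.

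One small gap in the writeup: after substituting the lantern relation into $\tau_\alpha\tau_{\beta_1}\tau_{\beta_2}\tau_{\gamma_1}^{-1}\tau_{\gamma_2}^{-1}$, the step that reduces $\tau_{\gamma_1}\tau_\beta\tau_{\gamma_2}\,\tau_{\gamma_1}^{-1}\tau_{\gamma_2}^{-1}$ to $\tau_\beta$ is \emph{not} a consequence of the pairwise commutations you list, because $\beta,\gamma_1,\gamma_2$ are the three interior curves of $P$ and pairwise intersect, so $\tau_\beta,\tau_{\gamma_1},\tau_{\gamma_2}$ do not commute. What you actually need is the cyclic invariance of the lantern relation: since $\tau_\alpha\tau_{\beta_1}\tau_{\beta_2}\tau_\gamma$ is central in $\mathrm{Mod}(P)$, the lantern relation $\tau_{\gamma_1}\tau_\beta\tau_{\gamma_2}=\tau_\alpha\tau_{\beta_1}\tau_{\beta_2}\tau_\gamma$ also gives $\tau_\beta\tau_{\gamma_2}\tau_{\gamma_1}=\tau_\alpha\tau_{\beta_1}\tau_{\beta_2}\tau_\gamma$, whence $\tau_{\gamma_1}\tau_\beta\tau_{\gamma_2}\tau_{\gamma_1}^{-1}\tau_{\gamma_2}^{-1}=\tau_\beta$. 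Adding that one sentence makes the computation airtight. You are also right to flag the sign/cyclic-order bookkeeping as the real subtlety --- the choice of which cyclic order of $\beta,\gamma_1,\gamma_2$ appears on the right-hand side of the lantern relation has to be checked against the configuration in Figure~8 of \cite{freedman_powell_2018} --- but the structure of the argument is sound.
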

\begin{figure}[h]
\includegraphics[width=0.5\textwidth]{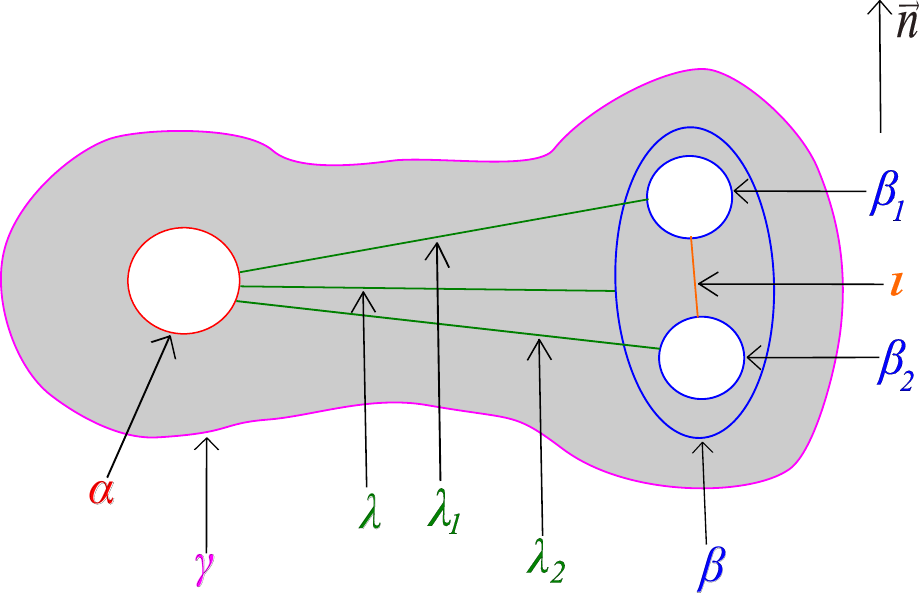}
\caption{composition of eyeglass twists}\label{com}
\end{figure}
\begin{proof}
	See Figure \ref{com} for the proof.
\end{proof}

\begin{defn}
	Suppose $\eta$ is an eyeglass in $N$, and $S$ a reducing sphere for $\Sigma$. We say $S$ separates $\eta$ if these two lenses of $\eta$ are disjoint from $S$ and lie in different components of $N\setminus S$.
\end{defn}
\begin{defn}
	Suppose $S$ is a reducing sphere for $\Sigma$, $\mu_s=S\cap\Sigma$, and $\eta$(with $\partial\eta=\alpha\cup\beta\cup\lambda)$) an eyeglass in $N$.  If $S$ is disjoint from the lenses of $\eta$,  a binary function $I(\eta,S)=\tilde{I}(\lambda,\mu_{s})$, where $\tilde{I}(\cdot,\cdot)$ is the geometric intersection number up to isotopies that leave $\alpha\text{ and }\beta$ invariant.
\end{defn}

\begin{defn}
	For any separating reducing sphere $S$ for  $\Sigma$, we associate it with a subgroup  $\mathcal{E}_{k}(S)$ of $\mathcal{G}(N,\Sigma)$ for each $k\in\mathbb{N}_{+} $ defined as follows:
	\begin{center}
		$\mathcal{E}_{k}(S)=\langle E_{k}(S)\rangle$,
	\end{center}
	where \begin{center}
	$E_{k}(S)=\left\{T_{\eta}\in \mathcal{G}(N,\Sigma): S\: \text{separates}\:  \eta \text{ and } I(\eta,S)\leq k\right\}$.	
	\end{center}
\end{defn}
By the definition, we have $E_k(S) \subset E_{k+1}(S)$. Then we have the ascending sequence:
$
\mathcal{E}_{1}(S) \leq \mathcal{E}_{2}(S) \leq \mathcal{E}_{3}(S) \leq\cdots.
$

Return to our setting that $N=V\cup_{\Sigma}W$ is a genus three \emph{Heegaard splitting} for a connected sum of two lens spaces and each $S_{i}$ is a separating reducing sphere for $\Sigma$. Then we have the following lemma.
\begin{lem}\label{reduce}
	For $i=1,2,3$ and $k\in\mathbb{N}_{+}$, we have $\mathcal{E}_{k+1}(S_i)\leq \mathcal{E}_{k}(S_i)$.
	\end{lem}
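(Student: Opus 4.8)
The plan is to show that every generator of $\mathcal{E}_{k+1}(S_i)$ already lies in $\mathcal{E}_k(S_i)$; since $\mathcal{E}_k(S_i)\leq\mathcal{E}_{k+1}(S_i)$ holds trivially, this gives the reverse inclusion and hence equality. So let $T_\eta$ be a generator of $\mathcal{E}_{k+1}(S_i)$, i.e. $S_i$ separates the eyeglass $\eta=(a,b,\lambda)$ and $I(\eta,S_i)=\tilde I(\lambda,\mu_i)\leq k+1$. If $\tilde I(\lambda,\mu_i)\leq k$ there is nothing to prove, so assume it equals exactly $k+1$. The idea is to realise $T_\eta$ as a product of eyeglass twists whose bridges meet $\mu_i$ in fewer points, using the composition formula of Lemma \ref{compo}: we want to find a disk $b'$ in the same handlebody as $b$, on the same side of $S_i$, such that $b$ is the band sum of $b'$ with a copy $b''$ of a meridian disk near $S_i$, arranged so that the two resulting bridges $\lambda_1,\lambda_2$ each intersect $\mu_i$ in at most $k$ points. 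Concretely, since the region of $N$ on the $b$-side of $S_i$ is a handlebody-with-a-ball-removed coming from a single lens space summand (genus one), the meridian disk $b$ together with a push-off of the sphere $S_i$ gives such a band-sum decomposition, and the bridge $\lambda$ can be split at its last intersection point with $\mu_i$ so that the pieces $\lambda_1$ and $\lambda_2$ carry at most $k$ of the original $k+1$ intersections. Then Lemma \ref{compo} yields $T_\eta=T_{\eta_1}\cdot T_{\eta_2}$ with each $I(\eta_j,S_i)\leq k$, so both factors lie in $E_k(S_i)\subset\mathcal{E}_k(S_i)$, and therefore $T_\eta\in\mathcal{E}_k(S_i)$.

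The key steps, in order, are: (1) reduce to a single generator $T_\eta$ with $I(\eta,S_i)=k+1$ exactly; (2) use the genus-one structure of the relevant component of $N\setminus S_i$ to exhibit a band-sum decomposition $b=b_1\cup_\iota b_2$ in which one piece is (isotopic to) a meridian disk lying in a collar of $S_i$, while keeping $\partial b_j$ disjoint from $\lambda$ and from $a$ as required by the hypotheses of Lemma \ref{compo}; (3) choose the splitting arc $\iota$ and the new bridges $\lambda_1,\lambda_2$ so that the $k+1$ intersection points of $\lambda$ with $\mu_i$ are distributed between $\lambda_1$ and $\lambda_2$ with at most $k$ going to either one — this is where one splits $\lambda$ at (a neighborhood of) an intersection point with $\mu_i$, absorbing one intersection into the band region; (4) invoke Lemma \ref{compo} to conclude $T_\eta=T_{\eta_1}T_{\eta_2}$ with both $\eta_j$ separated by $S_i$ and $I(\eta_j,S_i)\leq k$, and then observe that the two new twists, and hence $T_\eta$, lie in $\mathcal{E}_k(S_i)$.

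The main obstacle I expect is step (3), the careful bookkeeping that the two bridges produced really do have strictly fewer intersections with $\mu_i$: one must make sure that splitting $\lambda$ does not force either new bridge to wind back through $\mu_i$ and that the arcs $\lambda_1,\lambda_2$ stay in the planar subsurface bounded by $\alpha,\beta_1,\beta_2,\gamma$ demanded by Lemma \ref{compo} while remaining disjoint from the band $\iota$. Equivalently, one needs the geometric picture in which the ``extra'' intersection of $\lambda$ with $\mu_i$ is exactly the one that gets eaten by the band sum, and this relies on $S_i$ cutting off a genus one piece so that a meridian disk can be slid to sit right next to $S_i$. I would handle this by an explicit isotopy argument accompanied by a figure, analogous to Figure \ref{com}, rather than a purely combinatorial estimate; the additivity $T_\eta=T_{\eta_1}T_{\eta_2}$ and the containment in $\mathcal{E}_k(S_i)$ are then formal.
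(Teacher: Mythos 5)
Your proposal takes a genuinely different route from the paper's. The paper never decomposes the lens $b$: it keeps the pair of lenses $(a,b)$ fixed and changes only the bridge. Setting $p$ to be the crossing of $\lambda$ with $\mu_i$ closest to $\alpha$, it forms an auxiliary eyeglass $\eta'=(a,b,\lambda_1\cup\lambda_3)$, where $\lambda_1$ is the piece of $\lambda$ from $\alpha$ to $p$ and $\lambda_3$ is a fresh arc in the pair of pants $\Sigma\setminus(S_i\cup\beta)$ from $p$ to $\beta$. Then $I(\eta',S_i)=1\leq k$, so $\varphi_2=T_{\overline{\eta'}}\in\mathcal{E}_k(S_i)$, and applying $\varphi_2$ to $\eta$ strictly lowers the bridge's intersection with $\mu_i$. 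Lemma~\ref{conju} then gives $T_\eta=\varphi_2^{-1}\cdot T_{\varphi_2(\eta)}\cdot\varphi_2\in\mathcal{E}_k(S_i)$. In short, the paper reduces by conjugation via Lemma~\ref{conju}, whereas you try to reduce by composition via Lemma~\ref{compo}.

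Your composition route has a concrete gap. First, a misreading of Lemma~\ref{compo}: its output bridges $\lambda_1,\lambda_2$ are not subarcs of $\lambda$ but fresh arcs chosen in the four-holed sphere bounded by $\alpha,\beta_1,\beta_2,\gamma$, so ``splitting $\lambda$ at its last crossing'' is not what the lemma does, and the crossings of $\lambda$ with $\mu_i$ do not simply distribute between the new bridges. Second, and more seriously, Lemma~\ref{compo} requires $\beta_1$ and $\beta_2$ to be disjoint from $\lambda$. If $b_2$ is a push-off of the disk that $S_i$ cuts from the handlebody containing $b$, then $\beta_2$ is parallel to $\mu_i$, and this disjointness cannot hold while $\lambda$ is in minimal position with respect to $\mu_i$: any arc of $\lambda$ confined to the annulus between $\mu_i$ and $\beta_2$ would be boundary-parallel there, letting you isotope $\lambda$ off $\mu_i$ and contradicting $\tilde I(\lambda,\mu_i)=k+1$. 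Pushing $\lambda$ out of minimal position to gain the disjointness then destroys the control over $\tilde I(\lambda_j,\mu_i)$ that your induction needs. You flag this as ``bookkeeping,'' but it is precisely where the argument breaks, not a cosmetic difficulty. The paper's conjugation avoids the issue entirely because the auxiliary eyeglass $\eta'$ keeps the same lenses as $\eta$ and only needs its new bridge to cross $\mu_i$ once.
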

	\begin{proof}
It is sufficient to prove that $E_{k+1}\left(S_i\right) \subset \mathcal{E}_k\left(S_i\right)$. Consider an eyeglass twist $T_{\eta}$(suppose $\eta=(a,b ,\lambda)\text{ and }\partial\eta= \alpha\cup\beta\cup\lambda$) representing an element of $E_{k+1}\left(S_i\right)$.  We aim to show that $T_{\eta} \in E_k\left(S_i\right)$. Without loss of generality, we assume that $\beta$ lies in the genus 1 component of $\Sigma\setminus S_i$, and that the bridge $\lambda$ intersects $\mu_i(=S_i \cap \Sigma)$ at $k+1$ points. Let one of these points, say $p$,  be closest to $\alpha$, as depicted in Figure \ref{int}. The point $p$ divides $\lambda$ into two segments, $\lambda_1\text{ and }\lambda_2$, where $\lambda_1$ denotes the one that is disjoint from $\mu_i$; see Figure \ref{int}. 

Next, we choose an arc $\lambda_3$ in the pair of pants $\Sigma\setminus \left(S_{i}\cup\beta\right)$ that  connects the point $p$ and $\beta$, and whose interior does not intersect $\beta$; see Figure \ref{trans}. The choice of $\lambda_3$ is unique up to isotopy. Then we obtain a new eyeglass $\eta'=\left(a,b,\lambda_1\cup \lambda_3\right)$, with the corresponding two eyeglass twists $\varphi_1\text{ and }\varphi_2 \left(\text{i.e.}\: \varphi_1=T_{\eta'},\varphi_2=T_{\overline{\eta'}}\right)$. Note that $I(\eta',S_i)=1$, which implies that $\varphi_1,\varphi_2\in \mathcal{E}_{k}(S_i)$.
		
  Pushing the 1-handle $a\times I$ along  the path $\iota$, as illustrated by the green line in Figure \ref{trans}, produces an eyeglass twist. This twist is exactly $\varphi_{2}(=\tau^{-1}_{\alpha}\cdot\tau^{-1}_{\beta}\cdot\tau_{\gamma})$. After the excursion of the 1-handle $a\times I$ along path $\iota$, the intersection points $p$ and $p'$ are eliminated, as shown in Figure \ref{newpattern}.  This implies that $I\left(\varphi_2\left(\eta\right),S_i\right)<k+1$. By Lemma \ref{conju} , we then have $T_{\eta}=\varphi^{-1}_2 \cdot T_{\varphi_{2}(\eta)} \cdot \varphi_2$. 
Since both $T_{\varphi_{2}(\eta)}$ and $\varphi_{2}$ belong to $\mathcal{E}_{k}(S_i)$, it follows that $T_{\eta}$ also belongs to $\mathcal{E}_{k}(S_i)$.
	\end{proof} 
	\begin{figure}[h]
		\includegraphics[width=0.6\textwidth]{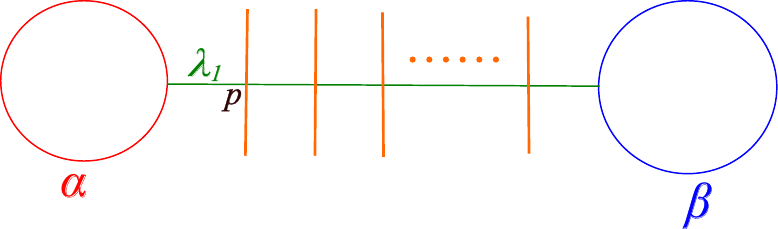}
		\caption{intersection pattern}\label{int}
	\end{figure}
	\begin{figure}[h]
		\centering
		\begin{subfigure}[b]{0.45\textwidth}
		\includegraphics[width=\textwidth]{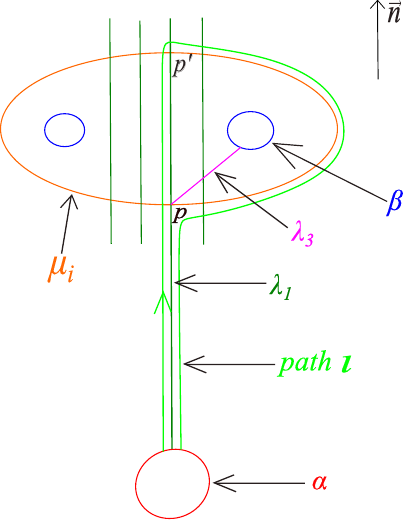}
		\caption{new eyeglass}\label{trans}
		\end{subfigure}\hfill
  \begin{subfigure}[b]{0.5\textwidth}
			\includegraphics[width=\textwidth]{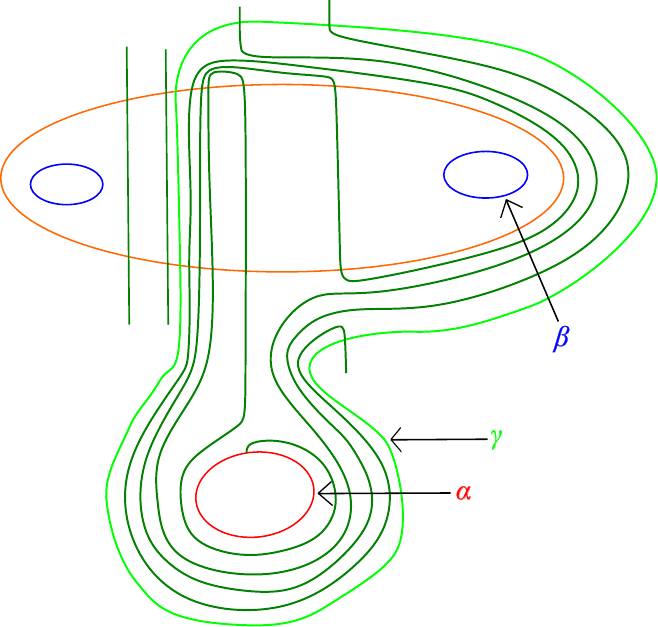}
			\caption{new intersection pattern}\label{newpattern}
		\end{subfigure}
		\caption{}
	\end{figure}
In summary, we have the equation\begin{center}
	$\mathcal{E}_{1}(S_i)=\mathcal{E}_{2}(S_i)=\mathcal{E}_{3}(S_i)=\cdots$
\end{center}for $i=1,2,3$.

In \cite{scharlemann_powells_2022}, Scharlemann defines a class of automorphisms of \emph{Heegaard splitting}, called bubble moves, which generalize  one of the five automorphisms proposed by Powell in \cite{powell_homeomorphisms_1980}. In this work, we extend the concept  to a more general setting.
\begin{defn}
		For a 3-manifold $N$ with \emph{Heegaard splitting} $N=A \cup_{\Sigma} B$, a bubble is a 3-submainfold $\mathcal{B}$ of $N$, whose boundary is a 2-sphere. If the boundary $\bdd\mb$ is a reducing sphere for $\Sigma$, we call $\mb$ a bubble for $\Sigma$. The genus of $\mathcal{B}\cap \Sigma$ is referred to as the \emph{genus} of $\mb$. In addition, a bubble is called trivial if it is a 3-ball, otherwise it is singular. 
\end{defn}
\begin{defn}\cite[Section 2]{scharlemann_powells_2022}\label{bubb}
Let $N=A \cup_{\Sigma} B$ be a \emph{Heegaard splitting}, and $\mathcal{B}$ a trivial bubble for $\Sigma$. A \emph{bubble move} is an isotopy of $\mathcal{B}$ along a closed path in $\Sigma\setminus int(\mb)$ that starts and ends at $\mathcal{B}$, returning $(\mathcal{B}, \mathcal{B} \cap \Sigma)$ to itself. See Figure \ref{bm}.
\end{defn}
	\begin{figure}[h]
	\centering
		\includegraphics[width=0.5\textwidth]{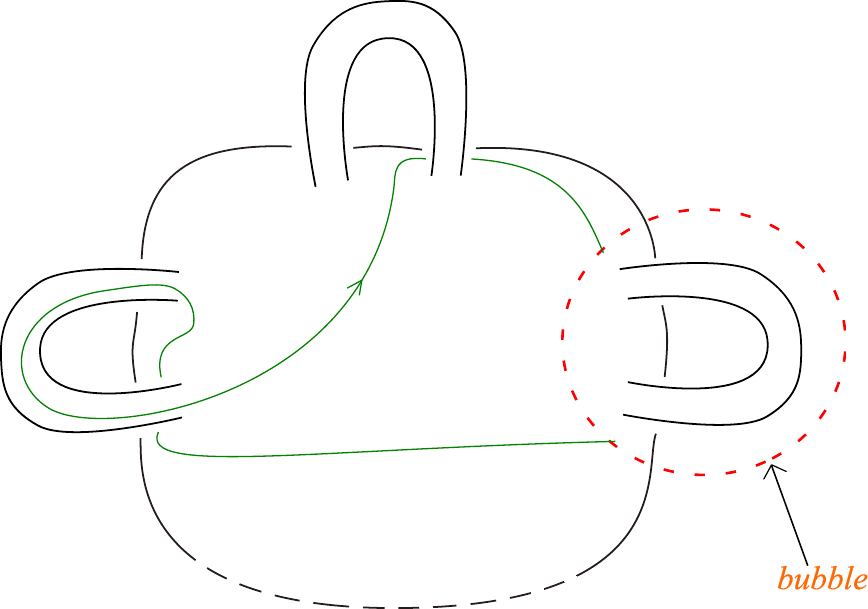}
		\caption{a bubble move}\label{bm}
	
\end{figure}

Given a genus 1 trivial bubble $\mathcal{B}$ for $\Sigma$, a bubble($\mb$) move represents an element of $\mathcal{G}(N, \Sigma)$. The subgroup of $\mathcal{G}(N, \Sigma)$ generated by all bubble($\mathcal{B}$) moves is denoted by $\mv_{\Sigma}(\mb)$. For a singular bubble $\mathcal{B}$, the definition of a bubble move does not directly apply. Therefore, we introduce a new class of automorphisms for singular bubbles.
\begin{defn}[\emph{visional bubble move}]\label{vb}
	Let $N=A \cup_{\Sigma} B$ be a \emph{Heegaard splitting} for a closed 3-manifold $N$, and $\mathcal{B}$ a bubble for $\Sigma$ with  $S$ as its boundary. The submanifold $N\setminus{int(\mb)}$ is also a bubble for $\Sigma$, which we refer to as the \emph{dual bubble} of $\mb$ and denote by $\mb'$. By capping the sphere boundary of $\mb'$ with a 3-ball, we obtain a new manifold $N(\mb)$.  The manifold $N(\mb)$ inherits a \emph{Heegaard splitting}, with its Heegaard surface $\Sigma'$ being the boundary sum of $\Sigma\setminus{int(\mb)}$ and a bordered torus(torus with an open disk removed), as illustrated in Figure \ref{hee}. Clearly, $N(\mb)\setminus int(\mb')$ is a trivial bubble in $N(\mb)$, bounded by the sphere $S$, denoted by $\mb^{3}$. As in the case of trivial bubbles,  a bubble($\mb^{3}$) move induces a diffeomorphism $h:N(\mb)\rightarrow N(\mb)$ so that $h|_{\mb^{3}}=\mathrm{id}$. We then glue the two diffeomorphisms $h|_{\mb'}:\mb'\rightarrow \mb'$ and $\mathrm{id}:\mb\rightarrow\mb$ along the sphere $S$ to obtain a diffeomorphism $\tilde{h}:(N,\Sigma)\rightarrow (N,\Sigma)$, which we refer to as a \emph{visional bubble($\mb$) move}. Then all visional bubble($\mb$) moves generate a subgroup of $\mg(N,\Sigma)$, denoted by $\mv_{\Sigma}(\mb)$.
\end{defn}
\begin{figure}[ht]
	\includegraphics[width=0.6\textwidth]{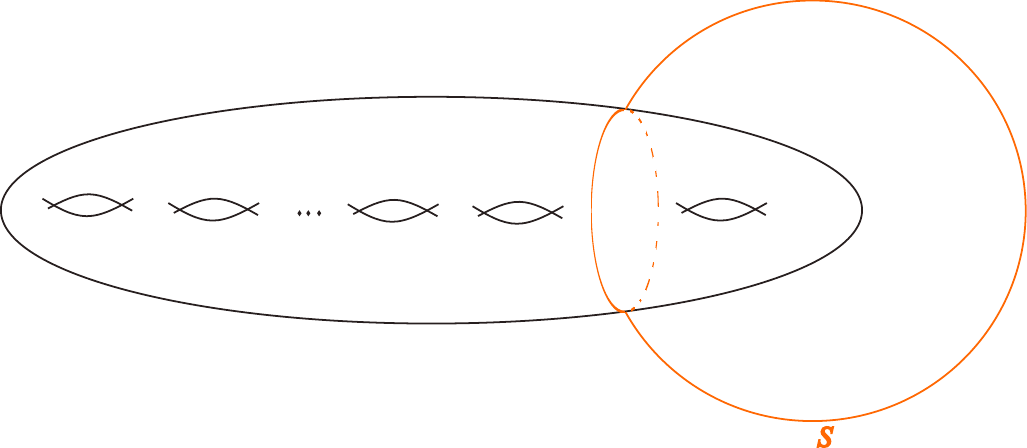}
	\caption{Heegaard surface $\Sigma'$}\label{hee}
\end{figure}

In this setting, each sphere $S_i$ bounds a genus 1 bubble,  denoted by $\mb_{i}$. So we obtain a subgroup of $\mathcal{G}(N, \Sigma)$ for each $i$, say $\mv_{\Sigma}(\mb_{i})$, just defined as above. Additionally, we define a subgroup of $\mathcal{G}(N, \Sigma)$ for each $i$ by
$$
\mathcal{H}_i=\left\{\varphi \in \mathcal{G}(N, \Sigma): \varphi\left(\bar{\mu}_i\right)=\bar{\mu}_i\right\},
$$
 where $\mathcal{H}_i$ is the stabilizer of the isotopy class of the curve $\mu_i$. Furthermore, let $\mathcal{H}$ be the subgroup of $\mathcal{G}(N,\Sigma)$ generated by $\mathcal{H}_1,\mh_2,\text{ and }\mh_3$, i.e., $\mathcal{H}=\langle\mathcal{H}_1,\mathcal{H}_2,\mathcal{H}_3\rangle$. According to the definition of the visional bubble moves, we have $\mv_{\Sigma}(\mb_{i})\leqslant\mh_i$.

\section{Stabilizers of reducing 2-spheres}\label{sec4}
 To prove the main theorem,  we firstly show that $\mathcal{G}(N,\Sigma)=\mathcal{H}$ in Theorem \ref{th}. Then we will prove that $\mathcal{H}$ is finitely generated in the next section. As a corollary, the reducing sphere complex $\mr$ is connected. Before proving Theorem \ref{th}, we introduce two Lemmas \ref{sepe} and \ref{same}.
\begin{lem}\label{sepe}
	\quad $\mathcal{E}_1(S_i)< \mathcal{H}\quad$ for $i=1,2,3$.
\end{lem}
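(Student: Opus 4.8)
The plan is to break $T_\eta$ into a product of elementary eyeglass twists and to locate each of these inside $\mathcal{H}$. Throughout write $\{i,j,k\}=\{1,2,3\}$, set $F_l:=\mathcal{B}_l\cap\Sigma$ (a once-punctured torus), and let $\mathcal{B}_l'=N\setminus\operatorname{int}\mathcal{B}_l$ be the dual bubble of $\mathcal{B}_l$. Let $T_\eta\in E_1(S_i)$ with $\eta=(a,b,\lambda)$. Since $S_i$ separates $\eta$, its two lenses lie on opposite sides of $S_i$ and the bridge meets $\mu_i$, so $I(\eta,S_i)=1$; isotoping $\eta$, assume $\lambda$ meets $\mu_i$ in a single point. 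After possibly exchanging the roles of $V$ and $W$, the lens of $\eta$ lying in $\mathcal{B}_i$ is $b$, and the other lens $a$ lies in $V$, hence (being disjoint from $S_i$) in the genus-$2$ handlebody $\mathcal{B}_i'\cap V$. We may assume $a$ is essential in $\mathcal{B}_i'\cap V$: the only alternative is $\partial a$ isotopic to $\mu_i$ in $\Sigma$, in which case $T_\eta$ already fixes $\bar\mu_i$ and lies in $\mathcal{H}_i\leq\mathcal{H}$.

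\emph{Decomposing the outer lens.} Let $D_j,D_k$ be the meridian disks of the sub-solid-tori $\mathcal{B}_j\cap V,\mathcal{B}_k\cap V$; cutting $\mathcal{B}_i'\cap V$ successively along $D_j$ and $D_k$ lowers its genus to $0$, so $\{D_j,D_k\}$ is a complete meridian system of $\mathcal{B}_i'\cap V$. Since every essential disk of a handlebody is an iterated band sum of parallel copies of the disks of a complete meridian system, $a$ is such a band sum of copies of $D_j$ and $D_k$. Applying Lemma \ref{compo} to the lens $a$ — legitimate by the Note identifying $T_{(a,b,\lambda)}$ with $T_{(b,a,\lambda)}$ — repeatedly along this decomposition gives
\[
   T_\eta \;=\; T_{\eta_1}\,T_{\eta_2}\cdots T_{\eta_n},\qquad \eta_m=(a'_m,\,b,\,\lambda_m),
\]
where each $a'_m$ is a parallel copy of $D_j$ or $D_k$ and $S_i$ still separates each $\eta_m$. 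It therefore suffices to prove $T_{\eta_m}\in\mathcal{H}$; rename $\eta_m$ as $\eta=(a,b,\lambda)$, where now the outer lens $a$ is the meridian disk of a summand handle $\mathcal{B}_l\cap V$ with $l\neq i$.

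\emph{The elementary case.} Put $\{i,l,l'\}=\{1,2,3\}$. If the bridge $\lambda$ is disjoint from $\mu_{l'}$ after an isotopy of $\eta$, then $a$, $b$ and $\lambda$ all miss the reducing sphere $S_{l'}$ — recall $a\subset\mathcal{B}_l$ and $b\subset\mathcal{B}_i$ are disjoint from $\mathcal{B}_{l'}$ — so $T_\eta$ is supported away from $S_{l'}$, fixes $\mu_{l'}$, and lies in $\mathcal{H}_{l'}\leq\mathcal{H}$. In general $\lambda$ runs through $F_{l'}$ in some number $w\geq 0$ of essential arcs. Because $S_{l'}$ does not separate $\eta$, one cannot reduce $w$ using Lemma \ref{reduce} for $S_{l'}$; instead, using the apparatus of Section \ref{sec3}, there is a visional bubble$(\mathcal{B}_{l'})$ move $g\in\mathcal{V}_\Sigma(\mathcal{B}_{l'})\leq\mathcal{H}_{l'}$ — coming from a bubble move in $N(\mathcal{B}_{l'})$ that drags the bubble around the $F_{l'}$-handle — for which $g(\eta)$ has one fewer essential arc in $F_{l'}$. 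By Lemma \ref{conju}, $T_\eta=g^{-1}\,T_{g(\eta)}\,g$; by induction on $w$ and the base case just treated, $T_{g(\eta)}\in\mathcal{H}$, so $T_\eta\in\mathcal{H}$. Combining the three steps, every generator of $\mathcal{E}_1(S_i)$ lies in $\mathcal{H}$, whence $\mathcal{E}_1(S_i)<\mathcal{H}$.

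The step I expect to be the main obstacle is the inductive reduction above: constructing a visional bubble$(\mathcal{B}_{l'})$ move that removes one wrap of the bridge around the summand $\mathcal{B}_{l'}$. This is the geometric core of the proof; since $S_{l'}$ does not separate $\eta$, the clean pair-of-pants argument of Lemma \ref{reduce} is unavailable, and the unwrapping must be carried out by an explicit isotopy of $\mathcal{B}_{l'}$ in the spirit of (but technically heavier than) that proof. A secondary point to verify carefully is that the iterated application of Lemma \ref{compo} in the second step keeps the hypotheses of that lemma satisfied at each stage and terminates.
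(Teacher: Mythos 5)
Your overall architecture (decompose the outer lens into standard meridians, then dispose of the resulting ``elementary'' eyeglass twists) is a plausible framework, and the ingredients you invoke---Lemma \ref{compo}, Lemma \ref{conju}, visional bubble moves---are the same ones the paper uses. The paper instead runs a single induction on $I(\alpha,\mu_3)$, splitting the outer lens \emph{one compression at a time} along an outermost disk cut off by the disk bounded by $\mu_3$, and only \emph{after} applying a visional $\mathcal{B}_1$ move so that $h(\lambda)$ is disjoint from $\mu_3$; that preparation is exactly what guarantees the hypotheses of Lemma \ref{compo} (the two new boundary circles meet $h(\lambda)$ in at most one point, and the residual one-point case is handled separately in Subcase 3.2). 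Your proposal, as written, is missing that control, and this produces two concrete gaps.

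First, in the ``Decomposing the outer lens'' step you apply Lemma \ref{compo} iteratively along a band-sum tree for $a$, but Lemma \ref{compo} requires $\partial a_1$ and $\partial a_2$ to be disjoint from the bridge $\lambda$ (and from $b$). If $a=a_1\natural_\iota a_2$, then $\partial a$ is the outer boundary of a pair of pants $P$ with the other two boundary components $\partial a_1,\partial a_2$, and $\lambda$ starts at a point of $\partial a=\partial P$; nothing forces the $P$-part of $\lambda$ to be boundary-parallel, so $\lambda$ can cross $\partial a_1\cup\partial a_2$ essentially. You neither verify nor arrange this hypothesis, and the fact that ``every essential disk is an iterated band sum of parallel copies of a complete meridian system,'' while true, is itself proved by exactly the outermost-arc compression the paper runs; so filling this gap properly essentially reproduces the paper's induction rather than replacing it.

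Second, and more seriously, the inductive step of your ``elementary case'' cannot work with the move you propose. You want a visional $\mathcal{B}_{l'}$ move $g$ to lower $w=\tilde I(\lambda,\mu_{l'})$. But by Definition \ref{vb}, a visional bubble$(\mathcal{B}_{l'})$ move restricts to the identity on $\mathcal{B}_{l'}$, hence fixes $\mu_{l'}$; therefore $\tilde I(g(\lambda),\mu_{l'})=\tilde I(g(\lambda),g(\mu_{l'}))=\tilde I(\lambda,\mu_{l'})$, computed relative to the lenses $g(\alpha),g(\beta)$, and $w$ is unchanged. This is precisely why the paper instead uses a visional $\mathcal{B}_1$ move (i.e., moving the bubble that contains a lens and is \emph{not} the one whose meridian you are trying to avoid), for which $\psi_1(\mu_2)\neq\mu_2$ and the intersection with $S_2$ can genuinely drop. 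Replacing $\mathcal{B}_{l'}$ by $\mathcal{B}_i$ or $\mathcal{B}_l$ in your induction, and adding the missing verification for Lemma \ref{compo}, would bring you back, in substance, to the paper's Cases 1--3.
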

\begin{proof}
	It is sufficient to prove that all generators of $\mathcal{E}_1\left(S_i\right)$ belong to the subgroup $\mathcal{H}$, i.e., $E_1(S_i)\subseteq \mathcal{H}$, for $i=1,2,3$. Without loss of generality, we assume that $i=1$. For any element $T_{\eta}\in E_1(S_1)$, $\eta=\left(a,b,\lambda\right)$ is an eyeglass such that $S_1$ separates $\eta$ and the bridge $\lambda$ intersects $S_1$ transversely at one point. In this case, both $\alpha=\partial a$ and $\beta=\partial b$ are disjoint from 
$S_1\cap \Sigma$.  We assume the following conditions:

1. $a$(resp. $b$) is a disk in $V$(resp. $W$). 

2. $\alpha\text{ and }\beta$ lie in the genus 2 and the genus 1 component of $\Sigma\setminus S_1$ respectively.

The other case that $\alpha $ and $\beta$ lies in the same genus 2 component of $\Sigma\setminus S_1$  will be discussed in Lemma 4.2. Now there are three cases depending on
the intersection number between $\alpha$ and $\mu_3=S_3 \cap \Sigma$.
	
	Case 1. $\alpha \cap \mu_3=\varnothing$ and $\alpha$ lies in the genus -1 component of $\Sigma \setminus S_3$. 
 
 In this case, we apply a visional $\mb_{1}$ move  to reduce the intersection number $I(\eta,S_2)$. To be precise,  there exists a visional bubble  move $\psi_1 \in \mathcal{H}_1$ so that $$I\left(\psi_1\left(\eta\right),S_2\right)=0,\;\psi_1\left(\alpha\right)=\alpha\text{ and } \psi_1\left(\beta\right)=\beta.$$ The following Figure \ref{reduction} illustrates how a visional bubble move reduces the intersection.
 	\begin{figure}[ht]
		\centering
		\begin{subfigure}[b]{0.45\textwidth}
			\includegraphics[width=\textwidth]{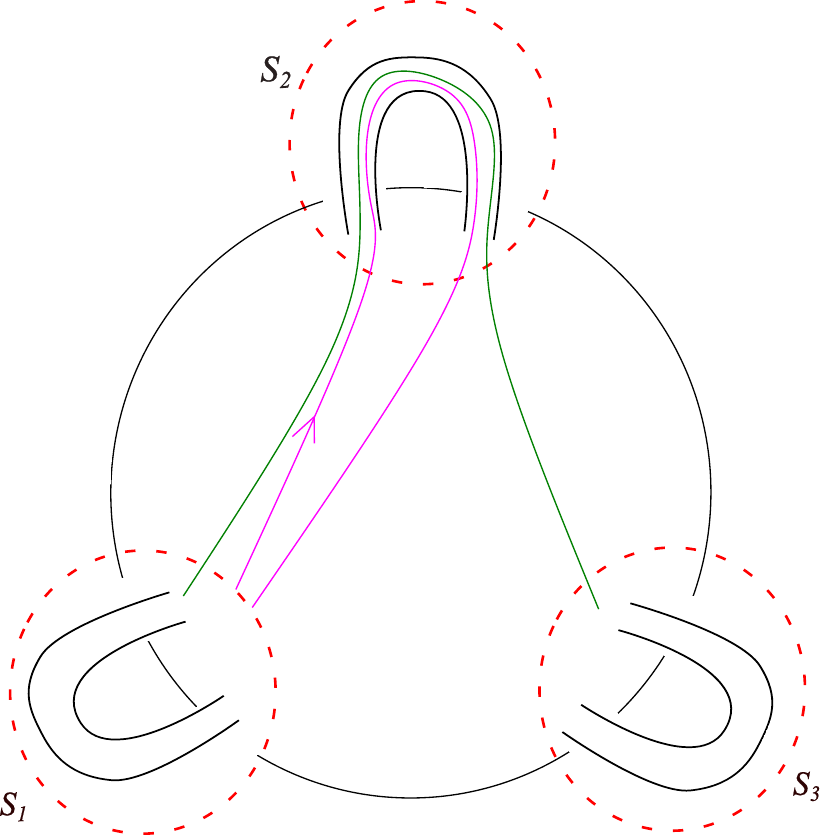}
			\caption{a visional $\mb_1$ move along the pink path}
		\end{subfigure}
		\hfill
		\begin{subfigure}[b]{0.45\textwidth}
		\includegraphics[width=\textwidth]{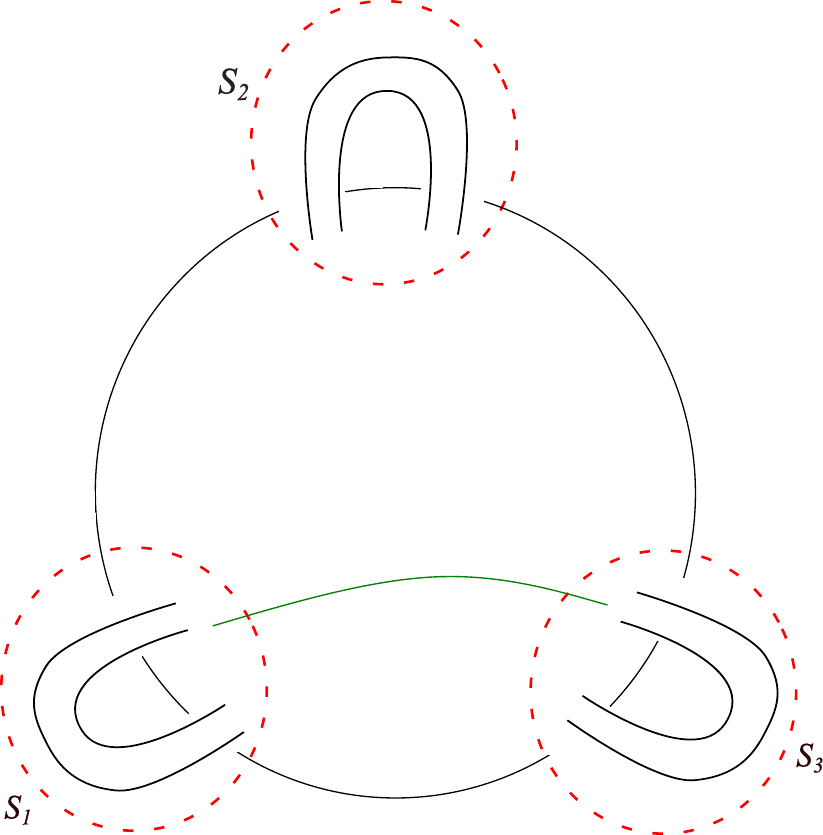}
		\caption{the reduction}
		\end{subfigure}
		\caption{}\label{reduction}
	\end{figure}
Note that $\psi(\eta)$ is disjoint from $S_2$. It means that $T_{\psi_{1}\left(\eta\right)}\in\mh_2$. Then we have
	$$
	T_{\eta}=\psi_{1}^{-1} \cdot T_{\psi_{1}\left(\eta\right)}\cdot\psi_{1} \in\mathcal{H}.
	$$
	
	Case 2. $\alpha \cap \mu_3=\varnothing$ and $\alpha$ lies in the genus 2 component of $\Sigma \setminus S_3$. Similarly, we can find a visional $\mb_{1}$ move $\psi_2 \in \mathcal{H}_1$ so that $$I\left(\psi_2\left(\eta\right),S_3\right)=0,\;\psi_2\left(\alpha\right)=\alpha\text{ and }\psi_2\left(\beta\right)=\beta.$$ 
	It follows that $T_{\psi_{2}(\eta)}\in\mh_3$ and $T_{\eta}=\psi_{2}^{-1} \cdot T_{\psi_{2}(\eta)}\cdot\psi_{2}\in\mathcal{H}$.
	
	Case 3. $\alpha \cap\mu_3 \neq \varnothing$. The proof is by induction on the geometric intersection member $I\left(\alpha, \mu_3\right)$. Since $\mu_3$ is separating, $I\left(\alpha, \mu_3\right)$ is an even number. Assume the statement is true for $I\left(\alpha, \mu_3\right) \leq 2n$. Now we consider the case that $I\left(\alpha, \mu_3\right)=2n+2$. We first isotope $\eta$ so that all the endpoints of the bridge $\lambda$ to lie in the genus 2 component of $\Sigma\setminus S_{3}$, while keeping  the curves $\alpha\text{ and }\beta$ invariant during the isotopy. Next, we apply  a visional bubble $\mb_{1}$ move $\bar{h} \in \mathcal{H}_1$  along $\lambda$ such  that $I\left(h(\alpha), \mu_3\right) \leqslant I\left(\alpha, \mu_3\right)$ and $h(\lambda)$ is disjoint from $\mu_3$. 
	
    Let $D$(resp. $D'$) be the disk bounded by $h(\alpha)$(resp. $\mu_3$) in $V$. Without loss of generality, we assume that $\mid D\cap D'\mid$ is minimal. Then there is an outermost disk $D''$ in $D'$ relative to $D$. Doing a compression on $D$ along $D''$ results in two essential disks $D_{1}$ and $D_{2}$, with boundaries $\alpha_{1}$ and $\alpha_2$. Because  $\alpha_1\cup\alpha_2$ intersects $h(\lambda)$ at most once, we now have two subcases to consider.

Subcase 3.1. $\left(\alpha_1\cup\alpha_2\right)\cap h(\lambda)=\varnothing$. By Lemma \ref{compo}, $T_{h(\eta)}$ is a composition of two eyeglass twists whose bases have less intersection with $\mu_3$. Since the intersection number is smaller,  the induction hypothesis applies. Therefore, by the inductive assumption, we have $T_{h(\eta)}\in\mathcal{H}$. 

Subcase 3.2.  $|\left(\alpha_1\cup\alpha_2\right)\cap h(\lambda)|=1$. Without loss of generality,
we assume that $\alpha_1$ intersects $h(\lambda)$ at one point. We now construct two new eyeglasses $\eta_1\left(=\left(D_1,b,\lambda_{1}\right)\right)\text{ and }\eta_2\left(=\left(D_{2},b,\lambda_{2}\right)\right)$, as depicted in Figure \ref{neweyeglasses}.  It is not hard to see that $T_{h(\eta)}$ can be expressed as a composition of $T_{\eta_1}$ and $T_{\eta_2}$.  It follows that $T_{h(\eta)}\in\mh$.

The proof for the other cases are similar. So we omit it.
\end{proof}
\begin{figure}[ht]
	\includegraphics[width=0.6\textwidth]{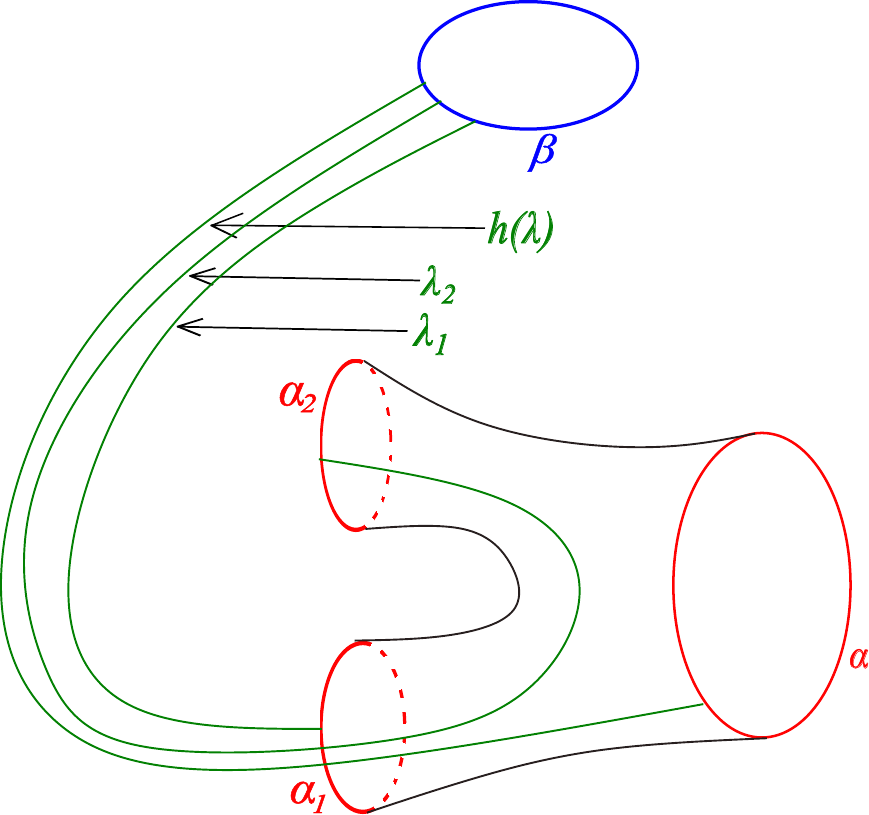}
	\caption{new eyeglasses}\label{neweyeglasses}
\end{figure}
\begin{lem}\label{same}
	For any eyeglass $\eta=(a,b,\lambda)\left(\text{with }\partial\eta=\left(\alpha,\beta,\lambda\right)\right)$ satisfying that both $\alpha\text{ and }\beta$ lie in the genus 2 component of $\Sigma\setminus S_i$$\left(i=1\:\text{or}\:2\right)$, we have $T_{\eta}\in\mathcal{H}$.
\end{lem}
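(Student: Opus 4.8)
The plan is to reduce this case to the one already handled in Lemma \ref{sepe}, where the two lenses lie in \emph{different} components of $\Sigma\setminus S_i$. Fix $i=1$ (the case $i=2$ is symmetric), and let $\eta=(a,b,\lambda)$ be an eyeglass with $\alpha=\partial a$ and $\beta=\partial b$ both lying in the genus $2$ component $P$ of $\Sigma\setminus S_1$. The disk $S_1$ does \emph{not} separate $\eta$, so none of the machinery of $\mathcal{E}_k(S_1)$ applies directly; however, inside the genus $2$ handlebody sitting on the $P$-side, the sphere $S_2$ (which is disjoint from $S_1$ and cobounds $S^2\times I$ with it) is available, and $S_2$ \emph{does} separate $N$ into a genus $1$ piece and a genus $2$ piece in a way that is ``transverse'' to the $S_1$-splitting. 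Concretely, I would first isotope $\eta$, keeping $\alpha$ and $\beta$ fixed, so that $\lambda$ is disjoint from $S_1$; then $\eta$ lies entirely in $N\setminus S_1$, i.e.\ inside the union of the genus $2$ bubble complement and $S^2\times I$.

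The key step is a \emph{band-sum / decomposition} argument together with a bubble-move reduction. Since $\alpha$ and $\beta$ both bound disks in the same handlebody-with-a-hole cut off by $S_1$, and both lenses lie in that genus $2$ piece, I can use Lemma \ref{compo}: band-sum the lens $b$ (say) with a small trivial disk pushed across $S_1$, or more usefully decompose one of the lenses along a meridian disk of the genus $2$ piece so that the eyeglass twist $T_\eta$ becomes a product $T_{\eta_1}\cdot T_{\eta_2}$ of eyeglass twists whose bridges each meet $\mu_1=S_1\cap\Sigma$ in a controlled way. The aim is to arrange that each $\eta_j$ either (a) is separated by $S_1$ — so $T_{\eta_j}\in E_k(S_1)=E_1(S_1)\subset\mathcal{H}$ by Lemmas \ref{reduce} and \ref{sepe} — or (b) has both lenses disjoint from $S_3$ in the appropriate component, so that a visional $\mathcal{B}_3$ move conjugates it into $\mathcal{H}_1$ or $\mathcal{H}_2$, exactly as in Cases 1 and 2 of Lemma \ref{sepe}. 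In either case the conjugating automorphism lies in $\mathcal{H}$, so $T_{\eta_j}\in\mathcal{H}$, and hence $T_\eta\in\mathcal{H}$.

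More precisely, I would run an induction on $I(\alpha,\mu_3)+I(\beta,\mu_3)$ (both even, since $\mu_3$ is separating), mirroring Case 3 of the previous lemma. In the base case both $\alpha$ and $\beta$ are disjoint from $\mu_3$: then a visional bubble $\mathcal{B}_1$ move along $\lambda$ (which exists because $\lambda$ can be taken disjoint from $S_1$) pushes the bridge off $S_2$ or $S_3$ without moving $\alpha,\beta$, and the resulting eyeglass twist lies in $\mathcal{H}_2$ or $\mathcal{H}_3$; conjugating back by the bubble move, which lies in $\mathcal{H}_1\subset\mathcal{H}$, gives $T_\eta\in\mathcal{H}$. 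For the inductive step, use an outermost-disk argument on a meridian disk bounded by $\alpha$ (or $\beta$) against the disk bounded by $\mu_3$, compress to get $\alpha_1\cup\alpha_2$ with strictly smaller intersection with $\mu_3$, and split $T_\eta$ via Lemma \ref{compo} into two twists on eyeglasses whose lenses have smaller $\mu_3$-intersection; these land in $\mathcal{H}$ by the inductive hypothesis, and a further application of Lemma \ref{conju} handles the at-most-one extra intersection point of the bridge with $\alpha_1$, just as in Subcase 3.2.

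The main obstacle I anticipate is the book-keeping in the decomposition step: when both lenses of $\eta$ sit in the same genus $2$ piece, splitting $T_\eta$ via a band sum can introduce an eyeglass whose \emph{lenses} are no longer disjoint from $S_1$ (the disk $S_1$ might be forced to meet the new lens), which would break the ``separated eyeglass'' hypothesis needed to invoke $E_1(S_1)\subset\mathcal{H}$. The resolution is to choose the decomposing disk for the genus $2$ piece carefully — e.g.\ as a meridian of the handlebody disjoint from $S_1$ and from one of $\alpha,\beta$ — so that the two sub-eyeglasses each have \emph{both} lenses disjoint from $S_1$, and then to track the $S_3$-intersection of the bridges rather than the $S_1$-separation. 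Verifying that such a disk can always be found, and that the induction on $\mu_3$-intersection terminates in the right base case, is the crux; everything else is a direct transcription of the arguments in Lemmas \ref{reduce} and \ref{sepe}.
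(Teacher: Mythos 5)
Your opening step---isotoping $\eta$ with $\alpha$ and $\beta$ fixed so that $\lambda$ becomes disjoint from $S_1$---is not possible in general, and this is precisely where the difficulty of the lemma lives. The bridge $\lambda$ may be forced to cross $\mu_1=S_1\cap\Sigma$ an essential (even) number of times: for example, $\lambda$ can dip into the genus-$1$ component of $\Sigma\setminus S_1$, wind around a handle there, and return, and no isotopy fixing $\alpha,\beta$ removes those crossings. If your isotopy were always available, $\eta$ would lie entirely on the genus-$2$ side of $S_1$, so $T_\eta$ would fix $\mu_1$ and lie in $\mathcal{H}_1\subset\mathcal{H}$ immediately, and the rest of your proposal (the band-sum decomposition and the induction on $I(\alpha,\mu_3)+I(\beta,\mu_3)$) would be superfluous. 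The paper dispatches that easy case in one sentence and then restricts to $\lambda\cap\mu_1\neq\varnothing$. Your subsequent induction is also on the wrong quantity: it tracks how the \emph{lenses} meet $\mu_3$, but the genuine obstruction is how the \emph{bridge} meets $\mu_1$, and neither the proposed base case nor the inductive step reduces that number. You flag the disk-choice step as an unverified ``crux''; it is, and the reason it resists being filled in is exactly the unaddressed $\mu_1$-crossings of $\lambda$.

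The paper's actual argument sidesteps the induction entirely. Assuming $\lambda\cap\mu_1\neq\varnothing$, compress $\Sigma$ along $a$, $b$, and the disk $D$ bounded by $\mu_1$; the resulting surface is a $2$-sphere $S$, and the hypothesis $\lambda\cap\mu_1\neq\varnothing$ is used to show that $S$ carries the scar of $D$ as well as those of $a$ and $b$. One then reads off two disjoint reducing curves $\ell_1,\ell_2\subset S$, whose reducing spheres $S_{\ell_1},S_{\ell_2}$ together with $S_1$ form a complete triplet $\T'$ with $S_1\in\T\cap\T'$. By Lemma \ref{stab} there is $\phi\in\mathcal{H}_1$ with $\phi(\T')=\T$, and after conjugating by $\phi$ the eyeglass is either already stabilizing one of the $\mu_j$ or is \emph{separated} by $\phi(S_{\ell_1})\in\{S_1,S_2,S_3\}$; at that point Lemmas \ref{reduce} and \ref{sepe} finish the job. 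The missing idea in your proposal is this production of a fresh reducing sphere that \emph{does} separate $\eta$, normalized back to the standard triplet via Lemma \ref{stab}, rather than any decomposition of the lenses.
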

\begin{proof}
Without loss of generality, we assume that both $\alpha$ and $\beta$  lie in the genus 2 component of $\Sigma\setminus S_1$. The other case is similar.  So we omit it.  If $\lambda$ is disjoint from $\mu_1=S_1\cap \Sigma$, then $T_{\eta}$ fixes 
$\mu_{1}$ and hence it lies in $\mathcal{H}$. So we assume that neither $\alpha$ nor $\beta$ is isotopic to $\mu_1$, and  that $\lambda \cap \mu_1\neq \varnothing$.  After doing three compressions on $\Sigma$ along $a$, $b$, and the disk $D$ bounded by $\mu_1$, we obtain a  resulting sphere $S$. 
\begin{clm}
    The sphere $S$ contains a scar of $D$.
\end{clm}
\begin{proof}
    Suppose the conclusion is false.  In this case, one of $\alpha$ and $\beta$ is separating in $\Sigma$, says $\alpha$. Then $\beta$ lies in the genus 1 component of $\Sigma\setminus \alpha$. Since $\alpha$ is disjoint from  $\mu_1$, it means that $\lambda$ is disjoint from $\mu_1$, a contradiction.
\end{proof}
Hence $S$ contains $D$ and the scars of $a$ and $b$. Then we choose two disjoint simple closed curves $\ell_1,\ell_2 \subset S$ such that  $\ell_1$ cuts off a disk containing only the scars of $a$, while $\ell_2$ cuts off a disk containing only the scars of $b$. It is not hard to see that both $\ell_1\text{ and }\ell_2$ are reducing curves. In other words,	there are two disjoint reducing sphere $S_{\ell_1}\text{ and }S_{\ell_2}$ so that $S_{\ell_i} \cap \Sigma=\ell_i$. By definition, these three spheres-$S_{\ell_1}, S_{\ell_2,}\text{ and }S_1$-constitute a complete sphere triplet, denoted by $\T'$. Note that $S_1$ is a common 2-sphere between $\T\left(=\left(S_1,S_2,S_3\right)\right)\text{ and }\T'$. By Lemma \ref{stab}, there exists an element $\phi \in \mh_{i}$ so that $\phi\left(\T'\right)=\T$. To prove $T_{\eta}\in 
 \mathcal {H}$, it is sufficient to prove $T_{\phi(\eta)} \in\mathcal{H}$.

 If $\alpha$ is isotopic to $\ell_1$, then $\phi(\alpha)$ is isotopic to one of $\mu_{1},\mu_2,$ and $\mu_3$. So $T_{\phi(\eta)}\in H$. Otherwise, $S_{\ell_1}$ separates $\eta$. Then $\phi(S_{\ell_1})$ separates $\phi(\eta)$. By Lemma \ref{reduce} and the argument in Lemma \ref{sepe}, we have$$T_{\phi(\eta)} \in E_{I\left(\phi\left(\eta\right),\phi(S_{\ell_1})\right)}\left(\phi(S_{\ell_1})\right)\subseteq\mathcal{E}_{I\left(\phi\left(\eta\right),\phi(S_{\ell_1})\right)}\left(\phi(S_{\ell_1})\right)=\mathcal{E}_1\left(\phi(S_{\ell_1})\right)<\mathcal{H}.$$
\end{proof}

Recently, the classical ``Haken's lemma" has been strengthened into the ``strong Haken's lemma" by several authors in various ways\cite{scharlemann_strong_2024,hensel_strong_2021,taylor_strong_2024}, which says that a sphere set in a 3-manifold can be isotoped to be aligned with the given Heegaard surface (see the following definition for ``aligned").

\begin{defn}\cite[Section 1]{freedman_uniqueness_2024}
A sphere set $E\subset N$ is a compact properly embedded surface in $N$ so that each component of $E$ is a sphere. Then, a Heegaard surface $\Sigma$ and a sphere set $E$ in $N\left(=A\cup_\Sigma B\right)$ are {\em aligned} if they are transverse, and each component of $E$ intersects $\Sigma$ in at most one circle. In addition, each disk component of $E\setminus\Sigma$ is essential in either $A$ or $B$.  
\end{defn}
 Freedman-Scharlemann\cite{freedman_uniqueness_2024} prove that not only can we align a given sphere set with a Heegaard surface, but also any two alignments can be related by a sequence of \emph{bubble moves}\cite[Definition 1.2]{freedman_uniqueness_2024} and \emph{eyeglass twists}\cite[Definition 1.3]{freedman_uniqueness_2024}.
\begin{note}
	The same word ``\emph{bubble move}" used here does not refer to an automorphism of $(N,\Sigma)$ as in definition \ref{bubb}. It is just an isotopy of a sphere, which pushes a sphere across a trivial bubble; see details in \cite[Definition 1.2]{freedman_uniqueness_2024}. Throughout the following of the article, we will not distinguish them in their notations when it does not cause confusion.
\end{note}

\begin{defn} \cite[Definition 1.4]{freedman_uniqueness_2024}
	Let $E_0$ and $E_1$ be two sphere sets aligned with $\Sigma$ in $N$.  $E_0, E_1$ are equivalent if there is an isotopy $H: N\times 
 I\rightarrow N$ so that $H_{s}:\Sigma\times \{s\}\rightarrow \Sigma$ for $0\leq s \leq 1$ and $H_{1}$ maps $E_0$ to $E_1$.
\end{defn}

\begin{thm}\cite[Theorem 1.6]{freedman_uniqueness_2024}\label{align}
	
	If $E_0, E_1$ are sphere sets that are properly isotopic in $N$ and are both aligned with $\Sigma$, then up to equivalence, $ E_1$ can be obtained from $E_0$ by a sequence of bubble moves and eyeglass twists.
\end{thm}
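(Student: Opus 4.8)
The plan is to analyze a proper isotopy $\{E_t\}_{t\in[0,1]}$ carrying $E_0$ to $E_1$ by tracking how the singular $1$-manifold $E_t\cap\Sigma$ evolves, and to recognize each elementary evolution as a $\Sigma$-preserving isotopy, a \emph{bubble move}, or an \emph{eyeglass twist}. Concretely, write the isotopy as $E_t=F_t(E_0)$ for an ambient isotopy $F_t$ of $N$ with $F_0=\mathrm{id}$, $F_1(E_0)=E_1$. After a small perturbation of the map $(x,t)\mapsto(F_t(x),t)$, rel $t\in\{0,1\}$, we may assume that $F_t(E_0)$ is transverse to $\Sigma$ for all but finitely many $t$ and that at each exceptional time there is exactly one tangency of Morse type: a center, which creates or destroys an inessential circle of $E_t\cap\Sigma$, or a saddle, which bands two arcs of $E_t\cap\Sigma$. (Alternatively one can set this up as a Rubinstein--Scharlemann graphic, using a sweep-out of $N$ together with the isotopy parameter; the $1$-parameter bookkeeping below is the shadow of that $2$-parameter picture.) Between consecutive critical times the combinatorial type of $(E_t,E_t\cap\Sigma)$ is constant, so the whole isotopy is a concatenation of elementary moves: ambient $\Sigma$-isotopies, center moves, and saddle moves.

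Next I would induct on a complexity such as the lexicographically ordered pair consisting of the number of critical times and $\int \#(E_t\cap\Sigma)$, and show that each elementary move is either absorbed into a $\Sigma$-preserving isotopy or replaced by a genuine bubble move or eyeglass twist at the cost of decreasing the complexity, with all modifications realized as insertions into the \emph{interior} of the isotopy (so that $E_0$ and $E_1$ are never disturbed). For a center move producing an inessential circle that cobounds a disk of $E_t$ and a disk of $\Sigma$ meeting nothing else, the only obstruction to cancelling it by a $\Sigma$-isotopy is the classical ``which side'' ambiguity, and pushing that small disk of $E_t$ across the trivial bubble is exactly a bubble move in the sense of \cite[Definition 1.2]{freedman_uniqueness_2024}. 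For a saddle move, an innermost/outermost-disk analysis together with the strong Haken lemma, applied to the pieces of $N$ cut off by $\Sigma$ and by the relevant reducing spheres, lets us cancel the saddle against a neighboring critical point, again up to the insertion of a bubble move; essential circles of $E_t\cap\Sigma$ are never born or killed by centers, and a saddle can only recombine them, which after strong-Haken normalization is likewise absorbed or peeled off as a bubble move.

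The main obstacle, and the real content of the theorem, is the global ``monodromy'' left once all the local simplifications above are exhausted: a sphere component of $E_0$ whose intersection circle is dragged once around an essential loop of $\Sigma$ returns to its original isotopy class, but the diffeomorphism of $(N,\Sigma)$ that effects this drag is nontrivial in $\mathcal{G}(N,\Sigma)$, and this diffeomorphism is precisely an eyeglass twist (cf. \cite[Figure 8]{freedman_powell_2018} and Lemma \ref{compo}). So the crux splits into two parts. First, one must prove that the residual isotopy, after deleting every center and saddle move, is a product of such ``drag around a loop'' monodromies; this is where one has to be vigilant that the simplifications of the previous paragraph have not smuggled in an uncontrolled modification of the endpoints, i.e. that every change was an insertion of an allowed move rel $t\in\{0,1\}$. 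Second, one must show that this residual class is generated precisely by eyeglass twists, with no extra moves and no further relations needed, which amounts to a localized computation of the action of the mapping class group of $\Sigma$ on isotopy classes of aligned sphere sets near the spheres.

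Putting these together: concatenating the bubble moves and eyeglass twists peeled off during the induction, and composing with the final $\Sigma$-preserving isotopy, exhibits $E_1$ as $(\text{product of bubble moves and eyeglass twists})(E_0)$ up to equivalence. I expect the saddle-cancellation step (controlled by the strong Haken lemma) and the completeness claim for the eyeglass twists to be the two places where genuine difficulty lies; the center moves and the generic normalization are routine.
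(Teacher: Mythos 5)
This statement is a citation, not a result proved in the paper: the authors invoke \cite[Theorem 1.6]{freedman_uniqueness_2024} as a black box, so there is no in-paper argument to compare your sketch against. Your proposal does capture the rough architecture of the Freedman--Scharlemann proof (put the ambient isotopy in general position with respect to $\Sigma$, catalogue the Morse tangencies as centers and saddles, absorb the trivial ones by $\Sigma$-isotopy, and recognize the nontrivial residue as bubble moves and eyeglass twists, with a Rubinstein--Scharlemann graphic in the background). But the two steps you yourself flag as ``where genuine difficulty lies'' are not incidental technicalities — they are essentially the whole theorem, and your outline leaves them unargued. The saddle cancellation cannot be dispatched by a generic ``strong Haken plus innermost disk'' appeal: one must control which side of the cancellation the isotopy occurs on and verify that the resulting modification really is an insertion rel endpoints of an allowed move, which requires tracking how essential intersection circles reassemble across the saddle. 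More seriously, the assertion that the residual monodromy ``is generated precisely by eyeglass twists, with no extra moves and no further relations needed'' is the crux of \cite{freedman_uniqueness_2024} and needs a genuine structural argument about diffeomorphisms of compression bodies fixing an aligned sphere set; calling it ``a localized computation'' papers over the main content. As written this is a faithful reading of where the difficulty lies rather than a proof, and if you want a proof you should follow the Freedman--Scharlemann argument (or one of the alternative strong-Haken treatments in \cite{scharlemann_strong_2024,hensel_strong_2021,taylor_strong_2024}) rather than reconstructing it from scratch.
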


Note that there is a bijection between the isotopy classes of reducing curves and the isotopy classes of reducing spheres. We commonly identify a reducing sphere with its corresponding reducing curve. 

\begin{thm} \label{th} 
    $\mathcal{G}(N,\Sigma)=\mathcal{H}$
\end{thm}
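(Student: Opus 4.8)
The plan is to show that an arbitrary $\bar f\in\mathcal{G}(N,\Sigma)$ lies in $\mathcal{H}$ by tracking the image of the reducing sphere $S_1$. Pick a representative diffeomorphism $f\in\dd(N,\Sigma)$. Then $f(S_1)$ is again a reducing sphere for $\Sigma$, and it is isotopic in $N$ to $S_1$ precisely when $S_1$ and $f(S_1)$ bound manifolds of the same diffeomorphism type; in our situation $S_1$ cuts off a once-punctured lens space summand of Heegaard genus $1$, so the relevant property of $f(S_1)$ is detected by the $3$-manifold topology. The key input is Theorem \ref{align}: both $S_1$ and $f(S_1)$ are (after a small isotopy) sphere sets aligned with $\Sigma$, and if they happen to be properly isotopic in $N$ then $f(S_1)$ is obtained from $S_1$ by a finite sequence of bubble moves and eyeglass twists. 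Each bubble move in that sequence is realized by an element of some $\mv_\Sigma(\mathcal{B})\leqslant\mathcal{H}_j$ (after first arranging the bubble to be one of the $\mathcal{B}_i$, up to an element of $\mathcal{H}$ supplied by Lemma \ref{stab} and Lemma \ref{transitive}), and each eyeglass twist $T_\eta$ appearing is, by Lemmas \ref{sepe} and \ref{same} together with Lemma \ref{reduce}, an element of $\mathcal{H}$: once the eyeglass is separated by some $S_i$ it lands in $\mathcal{E}_1(S_i)<\mathcal{H}$, and otherwise both lenses sit in a genus-$2$ side of some $S_i$ and Lemma \ref{same} applies. Composing, we obtain $\psi\in\mathcal{H}$ with $\psi(S_1)=f(S_1)$, hence $\psi^{-1}f$ fixes (the isotopy class of) $\mu_1$, so $\psi^{-1}f\in\mathcal{H}_1\subseteq\mathcal{H}$ and therefore $f\in\mathcal{H}$.

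More carefully, I would first reduce to the statement that for any reducing sphere $S'$ of the same ``type'' as $S_1$ there is an element of $\mathcal{H}$ carrying $S_1$ to $S'$; applying this to $S'=f(S_1)$ gives the theorem. To prove the reduction statement, align $S_1$ and $S'$ with $\Sigma$ (they are already of the aligned form, being reducing spheres meeting $\Sigma$ in a single essential curve), invoke Theorem \ref{align} to get a finite word in bubble moves and eyeglass twists taking one to the other, and then show each letter of the word can be absorbed into $\mathcal{H}$. For an eyeglass-twist letter $T_\eta$, the lenses $\alpha=\partial a$, $\beta=\partial b$ are disk boundaries in $V$ and $W$; in the genus $3$ splitting of a connected sum of two lens spaces the relevant configurations of $(\alpha,\beta)$ relative to the triple $(S_1,S_2,S_3)$ are exactly those handled in Lemmas \ref{sepe} and \ref{same}, so $T_\eta\in\mathcal{H}$. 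For a bubble-move letter, the moving bubble is a trivial (genus-$0$ or genus-$1$) bubble; using Lemma \ref{transitive} we may conjugate by an element of $\mathcal{H}$ so that it becomes a bubble compatible with one of the $\mathcal{B}_i$, and then the move is a (visional) bubble move, an element of $\mv_\Sigma(\mathcal{B}_i)\leqslant\mathcal{H}_i\subseteq\mathcal{H}$.

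The main obstacle I anticipate is the bookkeeping needed to ensure that the ``letters'' produced by Theorem \ref{align} really are of the restricted combinatorial types enumerated in Lemmas \ref{sepe} and \ref{same}, and that the intermediate sphere sets stay in a controlled position. Theorem \ref{align} is stated for a single sphere, whereas here it is cleanest to apply it keeping $S_2$ (or $S_3$) fixed as a reference, so that at every stage one knows on which side of $S_3$ a given lens lies; some care is required to isotope so that the eyeglasses occurring have their lenses disjoint from $S_1$ and $S_3$, exactly the hypotheses of the earlier lemmas. A secondary point is the genus bound: the bubbles arising in the Freedman--Scharlemann sequence a priori could have higher genus, but because $\Sigma$ has genus $3$ and the spheres in play cut off genus-$1$ pieces, the genus of any such bubble is at most $1$, which is why $\mv_\Sigma(\mathcal{B})$ as defined above suffices. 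Once these positional reductions are in place, the conclusion $\mathcal{G}(N,\Sigma)=\mathcal{H}$ follows by composing the finitely many letters.
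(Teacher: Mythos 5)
Your overall strategy --- use Theorem \ref{align} to connect $S_1$ to $f(S_1)$ by a sequence of bubble moves and eyeglass twists, then absorb the sequence into $\mathcal{H}$ and conclude by the orbit--stabilizer observation that $\psi^{-1}f\in\mathcal{H}_1$ --- is the same as the paper's. But the proposal has a genuine gap exactly where you yourself flag it: you assert that each eyeglass-twist letter $T_\eta$ in the FS word is ``of the restricted combinatorial types enumerated in Lemmas \ref{sepe} and \ref{same},'' but an eyeglass produced by Theorem \ref{align} is completely arbitrary; its lenses $\alpha,\beta$ can intersect all three of $\mu_1,\mu_2,\mu_3$, in which case neither Lemma \ref{sepe} (which needs $S_i$ to separate the eyeglass) nor Lemma \ref{same} (which needs both lenses disjoint from $S_i$, on the genus-2 side) applies. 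The remark ``some care is required to isotope so that the eyeglasses occurring have their lenses disjoint from $S_1$ and $S_3$'' is not an argument, and keeping $S_2$ or $S_3$ fixed in the alignment theorem does not obviously achieve this. Similarly, you treat the FS ``bubble move'' as if it were an element of some $\mv_\Sigma(\mathcal{B})\leqslant\mathcal{H}_j$, but per the paper's Note in Section 4, the bubble moves in Theorem \ref{align} are merely isotopies of the sphere across a trivial bubble, not automorphisms of $(N,\Sigma)$; so one cannot literally ``compose the letters'' to get $\psi\in\mathcal{H}$.

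The paper's resolution is an inductive \emph{conjugation} (normalization) that your proposal lacks. Working with orbits $\mathscr{O}_i$ (under $\mathcal{G}$) and $\mathscr{O}'_i$ (under $\mathcal{H}$), one proves by induction that every intermediate curve $\bar\lambda_i$ in the FS sequence lies in $\mathscr{O}'_1\cup\mathscr{O}'_2$: having found $\bar g\in\mathcal{H}$ with $g(\lambda_i)=\mu_1$, one looks at the $g$-conjugated move applied to $\mu_1$. Now the eyeglass (or bubble) is by construction disjoint from $\mu_1$, so either $S_1$ separates it (Lemma \ref{sepe}, after Lemma \ref{reduce}) or both lenses lie on the genus-2 side of $S_1$ (Lemma \ref{same}); in the bubble-move case, the moved curve, $\mu_1$, and the bubble's boundary curve cobound a pair of pants, and Lemma \ref{stab} supplies $h\in\mathcal{H}_1$ sending the resulting complete triplet to $(S_1,S_3,S_2)$, so $\bar\lambda_{i+1}\in\mathscr{O}'_2$. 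This renormalization at every step is the mechanism that makes the positional hypotheses of Lemmas \ref{sepe} and \ref{same} hold, and it is missing from your argument. Finally, you should also explain why $f(S_1)$ is isotopic to $S_1$ in $N$ before invoking Theorem \ref{align} (an innermost-disk argument), and you should treat the case $N_1=N_2$ separately, since there $\mathscr{O}_1=\mathscr{O}_2$ and one needs an explicit element of $\mathcal{H}_3$ swapping $S_1$ and $S_2$, as the paper does.
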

\begin{proof}
    We divide the proof into two cases: (1) $N_{1}\neq N_{2}$; (2) $N_{1}=N_{2}$. 
\subsection*{case 1.} 
Let $\mr^{0}$ be the $0$-skeleton of $\mr$. As both $\mg\left(N,\Sigma\right)$ and $\mh$ naturally act on  $\mr^{0}$, we denote the corresponding orbit containing the isotopy class $\bar{\mu}_{i}$ by $\mathscr{O}_i\text{ and }\mathscr{O}_{i}'$ respectively. Since we have assumed that $N_1\neq N_2$, we know that $\mathscr{O}_1\cap\mathscr{O}_2=\varnothing$. To prove the theorem, it suffices to show that $\mathscr{O}_1=\mathscr{O}'_1$. 

Given any reducing sphere $S$ for $\Sigma$ with the intersection curve $\mu\left(=S\cap \Sigma\right)$ representing an isotopy class $\bar{\mu}$ of $\mathscr{O}_1$, we will prove that $\bar{\mu}\in\mathscr{O}'_1$; if it does, we immediately obtain the desired result $\mathscr{O}_1=\mathscr{O}'_1$.
	
	By an innermost argument, these two essential spheres $S$ and $S_1$ are isotopic. Moreover, both $S$ and $S_1$ are aligned with $\Sigma$. By Theorem \ref{align}, $S$ is related to $S_1$ by a sequence of bubble moves and eyeglass twists. It means that there is a sequence of reducing curves $\lambda_{i}$ in $\Sigma$ so that $\lambda_{i+1}$ can be obtained from $\lambda_i$ by a bubble move or an eyeglass twist
	$$
	\mu_1=\Sigma \cap S_{1}=\lambda_1, \lambda_2, \lambda_3, \cdots \lambda_n=\Sigma \cap S=\mu.
	$$
	
	We first prove by induction that $\bar{\lambda}_i \in \mathscr{O}'_1\cup\mathscr{O}'_2$. The base case $\bar{\lambda}_1\in\mathscr{O}_1^{\prime}\cup\mathscr{O}_2^{\prime}$ clearly holds. Now we assume that $\bar{\lambda}_i \in\mathscr{O}_1^{\prime} \cup\mathscr{O}_2^{\prime}$. Then we prove that $\bar{\lambda}_{i+1} \in\mathscr{O}_1^{\prime} \cup\mathscr{O}_2^{\prime}$. Without loss of generality, we assume $\bar{\lambda}_i \in\mathscr{O}_1^{\prime}$. It means that there is an element $\bar{g}\in\mathcal{H}$ so that $\bar{g}\left(\bar{\lambda}_i\right)=\bar{\mu}_1$. Let $g \in\operatorname{Diff}^{+}(N, \Sigma)$ be a representative of $\bar{g}$ so that $g\left(\lambda_i\right)=\mu_1$. Then there are two cases as follows.

	Case 1. $\lambda_{i+1}$ is obtained from $\lambda_i$ by a bubble move. It implies that $g\left(\lambda_{i+1}\right)$ can be obtained from $g\left(\lambda_i\right)$ by a bubble move. In this buddle move, the trivial bubble has a sphere boundary $S_\ell$, where $S_\ell \cap \Sigma=\ell$. Then these three curves $g\left(\lambda_{i+1}\right), \ell$, and $\mu_1\left(=g\left(\lambda_i\right)\right)$ cobound a pair of pants illustrated as in Figure \ref{newpants}. Denote $S_{g\left(\lambda_{i+1}\right)}$ the reducing 2-sphere  intersecting transversely with $\Sigma$ in $g\left(\lambda_{i+1}\right)$. Note that  $S_1\cup S_\ell\cup S_{g\left(\lambda_{i+1}\right)}$ divides $N$ into four submanifolds of the same diffeomorphism type as those divided by $S_1\cup S_3\cup S_2$. It follows that there is a diffeomorphism $h\in\operatorname{Diff}^{+}\left(N,\Sigma\right)$  so that  $h\left(S_1,S_\ell,S_{g\left(\lambda_{i+1}\right)}\right)=\left(S_1, S_3, S_2\right)$. Thus, we have $h \cdot g\left(\lambda_{i+1}\right)=\mu_2$. Furthermore, $\bar{h}\cdot \bar{g}\left(\bar{\lambda}_{i+1}\right)=\bar{\mu}_2$, where $\bar{g}\in\mathcal{H}$ and $\bar{h} \in \mathcal{H}_1$. So $\bar{\lambda}_{i+1} \in\mathscr{O}_2^{\prime}$.
	\begin{figure}[ht]
		\includegraphics[width=0.6\textwidth]{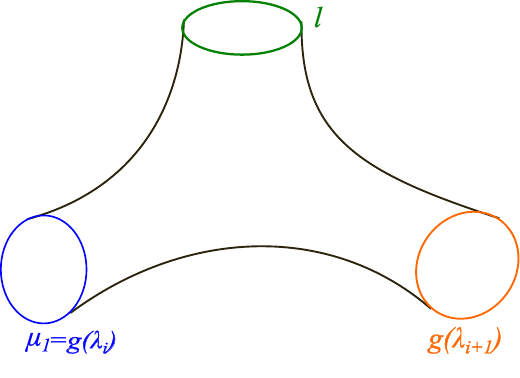}
		\caption{a pair of pants bounded by $g\left(\lambda_{i+1}\right), \ell$, and $\mu_1$}\label{newpants}
	\end{figure}
	
	Case 2. $\lambda_{i+1}$ is obtained from $\lambda_i$ by an eyeglass twist. It means that $g\left(\lambda_{i+1}\right)$ can be obtained from $g\left(\lambda_i\right)\left(= \mu_1\right)$ by an eyeglass twist $T_\eta$. There are two subcases as follows.
	
	Subcase 2.1. $S_1$ separates $\eta$. By Lemma \ref{reduce} and Lemma \ref{sepe}, we have $T_\eta \in\mathcal{H}$. As $T_{\eta}^{-1}\cdot g\left(\lambda_{i+1}\right)=\mu_1$, $\bar{\lambda}_{i+1} \in\mathscr{O}_1^{\prime}$.

	Subcase 2.2. $S_1$ does not separate $\eta$. Then these two lenses of $\eta$ both lie in the component of $N\setminus S_1$ which contains the genus 2 component of $\Sigma\setminus S_1$. By Lemma \ref{same}, $T_{\eta} \in\mathcal{H}$. So we have $\bar{\lambda}_{i+1} \in\mathscr{O}'_1$.
	
	By the above argument, we have completed the proof for the statement that $\bar{\lambda}_i \in \mathscr{O}_1^{\prime}\cup\mathscr{O}_2^{\prime}$ for all $i \leqslant n$. In particular, $\bar{\mu}=\bar{\lambda}_n \in\mathscr{O}_1^{\prime} \cup\mathscr{O}_2^{\prime}$. However, $\bar{\mu}\in\mathscr{O}_1, \mathscr{O}_1 \cap\mathscr{O}_2=\varnothing,\text{ and } \mathscr{O}_i^{\prime} \subset\mathscr{O}_i$; it implies that $\bar{\mu}\in\mathscr{O}_1^{\prime}$. This completes the proof of the case 4.1.
\subsection*{case 2.}
	By the same argument as above, we can construct a diffeomorphism $f\in\operatorname{Diff}^{+}\left(N, \Sigma\right)$ so that $f\left(S_1,S_2,S_3\right)=\left(S_2,S_1,S_3\right)$. It means that $\mathscr{O}_1^{\prime}=\mathscr{O}_2^{\prime}$ . And we can apply the above method  to show that $\mathscr{O}_1^{\prime}=\mathscr{O}_1$  with only slight modifications. Overall, $\mg(N, \Sigma)=\mh$.
\end{proof}

  We use the symbol $\mr$ to represent the reducing sphere complex for the \emph{Heegaard splitting} $N=V\cup_\Sigma W$, which is a subcomplex of the curve complex spanned by those curves that bound disks in both two handlebodies.  Now we use the above results to prove the connectedness of the reducing sphere complexes.
\begin{proof}[Proof of Corollary \ref{reducing}]
The reducing 2-spheres $S_1,S_2\text{ and }S_3$ are contained in a same component of $\mr$, says $\mr'$. To prove the connectedness of $\mr$, it suffices to show that $\mr=\mr'$.
	
	Given any reducing sphere $S\in \mr$, there are two other reducing spheres $S'\text{ and }S''$ in $\mr$ such that the sphere triplet $\left(S,S',S''\right)$ is complete. Denote $\T'=\left(S,S',S''\right)$. By Lemma \ref{transitive}, there is an element $\nu\in\mg(N,\Sigma)$ so that $\nu\left(\T\right)$ is congruent to $\T'$. By Theorem \ref{th}, we have$$\nu=\theta_{n}\cdot\theta_{n-1}\cdots\theta_{2}\cdot\theta_{1},$$where $\theta_{i}\in \mh_1\cup\mh_2\cup\mh_3.$
	
	Let $\nu_i=\theta_{i}\cdot\theta_{i-1}\cdots\theta_{2}\cdot\theta_{1}\left(0\leqslant i\leqslant n\right)$ and $\T_i=\nu_{i}(\T)$. Then we obtain a sequence of triplets:$$\T=\T_0,\T_1,\T_2\cdots\cdots\T_{n-1},\T_{n}=\T'.$$
	We identify a complete sphere triplet with a 2-simplex of $\mr$. Then we prove by induction that $\T_i\subset\mr'$. The base case $\T_0\subset\mr'$ clearly holds. Now we assume that $\T_k\subset\mr'$. Without loss of generality, we assume that $\theta_{k+1}\in\mh_1$. By induction assumption, $\T_{k}$ and $S_1$ are contained in the same component $\mr'$. So $\theta_{k+1}\left(\T_{k}\right)$ and $\theta_{k+1}(S_1)$ are also in the same component of $\mr$. Since $\theta_{k+1}(S_1)=S_1\in\mr'$, $\T_{k+1}=\theta_{k+1}\left(\T_{k}\right)\subset\mr'$. Therefore $S\in\T'=\T_{n}\subset\mr'$. This completes the proof.
\end{proof}
\section{Finite generation}
\label{sec5}
We prove that each $\mh_i$ is finitely generated, and so is $\mathcal{H}$. Before starting our proof, we introduce a definition as follows.
\begin{defn}
	Let $N=A \cup_{\Sigma} B$ be a \emph{Heegaard splitting} and $D$ a collection of finitely many marked disks in $\Sigma$. In this case, we define the \emph{Goeritz group} $\mg\left(N,\Sigma,D\right)$ to be the group of diffeomorphisms of $N$ that preserve the \emph{Heegaard splitting} and the disk set $D$, modulo isotopies that leave $\Sigma$ and $D$ invariant.
\end{defn}
 Suppose $\mb$ is a bubble for $\Sigma$, with the boundary sphere $S=\partial \mb$ intersecting $\Sigma$ in an essential simple closed curve $\mu$. Let $D_{a}$ be the disk $S\cap A$ and $\Sigma_{\mb}$ the bordered surface  $\Sigma\setminus int(\mb)$. Denote $\Sigma(\mb)$ the closed surface $\Sigma_{\mb}\cup D_{a}$. Since $S$ is separating in $N$, we cap the sphere boundary $S$ of the dual bubble $\mb'\left(=N\setminus int\left(\mb\right)\right)$ with a 3-ball to obtain a new closed orientable 3-manifold $N(\mb)$. 
 It is not hard to see that $\Sigma(\mb)$ is a Heegaard surface for $N\left(\mb\right)$.  Placing an orientation for the curve $\mu$, we have the oriented curve $\vec{\mu}$ and its isotopy class  $[\vec{\mu}]$. Denote $G_{\vec{\mu}}$ the stabilizer of $[\vec{\mu}]$, i.e., $$G_{\vec{\mu}}\defeq\left\{\phi\in\mg\left(N,\Sigma\right): \phi\left([\vec{\mu}]\right)=[\vec{\mu}]\right\}.$$
 

Now we define a subgroup of $\dd\left(N,\Sigma\right)$ as follows:
$$\dd\left(N,\Sigma,S\right)\defeq\left\{f\in\dd\left(N,\Sigma\right): f|_{S}=\mathrm{id}\right\}.$$By the definition, if $f\in\dd\left(N,\Sigma,S\right)$, then $f(\mb'=\overline{N-\mb})=\mb'$. Then we associate it with an element of the \emph{Goeritz group} $\mg\left(N(\mb),\Sigma(\mb), D_{a}\right)$  as follows: since  $N(\mb)$ is the union of $B^{'}$ and a 3-ball, $f\mid_{\mb'}$ can be naturally extended into a diffeomorphism $\hat{f}: \left(N\left(\mb\right),\Sigma(\mb),D_{a}\right)\rightarrow\left(N\left(\mb\right),\Sigma(\mb),D_{a}\right)$. Note that all different extensions of $f$ are pairwise isotopic. Subsequently, we obtain a map $\rho_2: \dd\left(N,\Sigma,S\right)\rightarrow\mg\left(N\left(\mb\right),\Sigma\left(\mb\right),D_{a}\right)$. It is not hard to verify that $\rho_2$ is also an epimorphism. 

Restricting the natural homomorphism $\rho_1:\dd(N,\Sigma)\rightarrow\mg\left(N,\Sigma\right)$ to the subgroup $\dd\left(N,\Sigma,S\right)$ results in a restriction map, which we still denote by $\rho_1$. It is easy to see that $\rho_{1}\left(\dd\left(N,\Sigma,S\right)\right)=G_{\vec{\mu}}$. 

Then we want to define a homomorphism $\rho: G_{\vec{\mu}}\rightarrow\mg\left(N\left(\mb\right),\Sigma\left(\mb\right),D_{a}\right)$ so that the following diagram commutes.
$$
\begin{tikzcd}
	\dd\left(N,\Sigma,S\right)\arrow[rd,"\rho_2"]  \arrow[r, "\rho_1"] & G_{\vec{\mu}} \arrow[d,"\rho"]\\
	& \mg\left(N\left(\mb\right),\Sigma\left(\mb\right),D_{a}\right)
\end{tikzcd}
$$
Note that if such $\rho$ exists, it is uniquely determined by the requirement. And its existence is guaranteed by the following lemma.
	\begin{lem}
		For any diffeomorphism $f \in\dd\left(N,\Sigma,S\right)$, if $\rho_1(f)=\mathrm{id}$, then $\rho_2(f)=\mathrm{id}$.
	\end{lem}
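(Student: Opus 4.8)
The plan is to take a null-isotopy of $f$ in $\dd(N,\Sigma)$, correct it by an isotopy-extension so that it respects the bubble $\mb$, and then push it through $\rho_2$; since $\rho_2$ is continuous from the topological group $\dd(N,\Sigma,S)$ to the discrete group $\mg(N(\mb),\Sigma(\mb),D_a)$, any path landing in $\ker\rho_2$ will force $\rho_2(f)=\mathrm{id}$.

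Concretely, I would first fix a path $\{H_t\}_{t\in[0,1]}$ in $\dd(N,\Sigma)$ with $H_0=\mathrm{id}$ and $H_1=f$, which exists since $\rho_1(f)=\mathrm{id}$. Because $f|_S=\mathrm{id}$, the pair $\bigl(H_t(S),H_t(\mu)\bigr)$ is a loop based at $(S,\mu)$ in the space of reducing spheres for $\Sigma$ together with their trace curves on $\Sigma$. Applying the isotopy extension theorem in the pair $(N,\Sigma)$, this motion is realized by an ambient isotopy $\{G_t\}$ in $\dd(N,\Sigma)$ with $G_0=\mathrm{id}$, $G_t(S)=H_t(S)$ and $G_t(\mu)=H_t(\mu)$. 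Then $\Phi_t:=G_t^{-1}\circ H_t$ lies in $\dd(N,\Sigma)$, satisfies $\Phi_0=\mathrm{id}$ and $\Phi_1=G_1^{-1}\circ f$, and preserves $S$, $\mu$, and hence $\mb$ and the dual bubble $\mb'$, setwise for every $t$. Restricting $\Phi_t$ to $\mb'$ and capping $S=\bdd\mb'$ by a cone extension that depends continuously on $t$ and carries the disk $D_a\subset S$ to itself throughout, one obtains a path in $\dd(N(\mb),\Sigma(\mb),D_a)$ from $\mathrm{id}$ to the cap of $\Phi_1|_{\mb'}$; discreteness of $\mg(N(\mb),\Sigma(\mb),D_a)$ makes this cap trivial there, and since capping is multiplicative up to isotopy we conclude $\rho_2(f)=\rho_2(G_1)$ in $\mg(N(\mb),\Sigma(\mb),D_a)$.

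It then remains to prove $\rho_2(G_1)=\mathrm{id}$, where $G_1$ lies in the identity component of $\dd(N,\Sigma)$ and preserves $S$ and $\mu$ setwise; I expect this to be the main obstacle. One route is to exploit the freedom in the isotopy-extension step, together with an analysis of the region swept out by $S$, to arrange that $\rho_2(G_1)=\widehat{G_1|_{\mb'}}$ is, up to isotopy in $\dd(N(\mb),\Sigma(\mb),D_a)$, a composition of a sphere twist about $S=\bdd\mb^{3}$ and a diffeomorphism supported in the ball $\mb^{3}$ fixing $D_a\subset\bdd\mb^{3}$: the first is inessential because the collar carrying it can be pushed into $\mb^{3}$, and the second is inessential because $\mathrm{Diff}(B^{3}\ \mathrm{rel}\ \bdd)$ is contractible (Hatcher). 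A cleaner alternative would be to establish directly that $\dd(N,\Sigma,S)\hookrightarrow\dd(N,\Sigma)$ is injective on $\pi_{0}$ — using uniqueness of the connected-sum sphere $S$ in a connected sum of two lens spaces and finiteness properties of the diffeomorphism groups of lens spaces — which would let us choose $\{H_t\}$ to fix $S$ pointwise from the outset and make the descent through $\rho_2$ immediate. Either way, once $G_1$ (equivalently, the correcting isotopy) is under control, the remaining verifications are routine isotopy bookkeeping.
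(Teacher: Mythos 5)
Your approach is genuinely different from the paper's and, as written, has a real gap at exactly the point you flag. You reduce to showing that the ambient isotopy $G_1$ (the isotopy-extension of the loop $t\mapsto H_t(S)$) satisfies $\rho_2(G_1)=\mathrm{id}$, but this is not established. The loop traced out by $S$ under a null-isotopy of $f$ can a priori be a nontrivial loop of reducing spheres, in which case $G_1$ would be a \emph{bubble move} on $\mb$; and a bubble move restricted to $\Sigma(\mb)$ is precisely a point-pushing map, which is generically nontrivial in $\mg(N(\mb),\Sigma(\mb),D_a)$. The only constraint you have on $G_1$ is that $\rho_1(G_1)=\mathrm{id}$ (since $\rho_1(\Phi_1)=\mathrm{id}$ and $\Phi_1=G_1^{-1}f$), i.e.\ you have reduced the lemma for $f$ to the lemma for $G_1$ — a reduction that is circular unless you can argue that $G_1$ is structurally simpler than an arbitrary element of $\dd(N,\Sigma,S)$. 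Your sketches for how to close this gap (Hatcher's theorem on $\mathrm{Diff}(B^3\ \mathrm{rel}\ \partial)$, or $\pi_0$-injectivity of $\dd(N,\Sigma,S)\hookrightarrow\dd(N,\Sigma)$) are both plausible directions but neither is carried out; the latter is in fact a strictly stronger statement than the lemma, so invoking it is not an ``alternative'' but a deferral.

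There is also a smaller technical issue: $G_1$ only preserves $S$ setwise, so $\rho_2(G_1)$ is not literally defined until you further adjust $G_1$ to fix $S$ pointwise; the line ``$\rho_2(f)=\rho_2(G_1)$'' needs this extra step made explicit.

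For contrast, the paper avoids all of this 3-dimensional isotopy-extension machinery by working entirely with curves on surfaces. Since $\rho_1(f)=\mathrm{id}$, $f$ fixes the isotopy class of every essential simple closed curve in $\Sigma$; by the inclusion-of-subsurfaces lemma \cite[Lemma 3.16]{farb_primer_2012}, $f$ therefore fixes the isotopy class of every essential simple closed curve in $\Sigma_{\mb}$. By the Alexander method (or a change-of-coordinates/induction-on-genus argument), a mapping class of $\Sigma_{\mb}$ fixing the boundary pointwise and acting trivially on all curves is a power of the boundary Dehn twist $\tau_{\mu}$, and $\tau_{\mu}$ dies in $\mg(N(\mb),\Sigma(\mb),D_a)$ because the isotopy is allowed to rotate the marked disk $D_a$. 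That argument is short and self-contained, whereas yours, even if completed, would require substantially more input (sweep analysis, Hatcher, or a $\pi_0$-injectivity theorem).
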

	\begin{proof}
		Assume $\vec{\alpha}$ is an oriented essential simple closed curve in $\Sigma_{\mb}$. As $\rho_{1}(f)=\mathrm{id}$, $f(\vec{\alpha})$ is isotopic to $\vec{\alpha}$ in $\Sigma$. By \cite[Lemma 3.16]{farb_primer_2012}, we know that $f(\vec{\alpha})$ is also isotopic to $\vec{\alpha}$ in $\Sigma_{\mb}$. In other words, $f$ preserves the isotopy classes of all oriented essential simple closed curves in $\Sigma_{\mb}$. Then we can prove by induction on genus that such a diffeomorphism is isotopic to a power of \emph{Dehn twist} along the boundary curve $\mu$. It means that $\rho_{2}(f)=\mathrm{id}$.
	\end{proof}
	It follows that such $\rho$ exists and is surjective, and the kernel of $\rho$ is denoted by $\mi(\rho)$. Then we have the following exact
 sequence
 \begin{equation}
     1\rightarrow\mi(\rho)\overset{i}{\rightarrow} G_{\vec{\mu}}\overset{\rho}{\rightarrow}\mg\left(N\left(\mb\right),\Sigma\left(\mb\right),D_{a}\right)\rightarrow 1.
 \end{equation}
	
	Similarly,  for the dual bubble $\mb'$, we also have a dual exact sequence
 \begin{equation}
     1\rightarrow\mi(\rho')\overset{i'}{\rightarrow} G_{\vec{\mu}}\overset{\rho'}{\rightarrow}\mg\left(N\left(\mb'\right),\Sigma\left(\mb'\right),D_{a}\right)\rightarrow 1.
 \end{equation}
\begin{lem}
    The composite homomorphism 	$\rho'\cdot i$ is an epimorphism.
\end{lem}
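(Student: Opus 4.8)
The plan is to show that every element of $\mg(N(\mb'),\Sigma(\mb'),D_a)$ lifts, via $\rho'\cdot i$, through an element of $\mi(\rho)$, i.e.\ through a diffeomorphism of $(N,\Sigma)$ supported \emph{inside} $\mb$ (up to isotopy). First I would unpack the two exact sequences geometrically: $\mi(\rho)$ consists of classes in $G_{\vec\mu}$ that act trivially on $\Sigma(\mb)=\Sigma_\mb\cup D_a$, hence are represented by diffeomorphisms that are the identity on $\mb'$ and on the collar $S\times I$; such a diffeomorphism is essentially supported on the bubble $\mb$, which carries a genus $1$ Heegaard splitting of a handlebody (here $N_i\setminus B^3$, or a $3$-ball in the $S_3$ case). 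Symmetrically, $\rho'$ records the action on $\Sigma(\mb')$, which is the surface obtained by capping off $\Sigma_\mb$'s complement — i.e.\ the ``$N(\mb')$ side'' is built precisely from the piece $\mb$ together with a $3$-ball. So the image $\rho'(\mi(\rho))$ is exactly the subgroup of $\mg(N(\mb'),\Sigma(\mb'),D_a)$ coming from diffeomorphisms supported on $\mb$ and fixing $S=\partial\mb$ pointwise, i.e.\ the image of $\dd(N(\mb'),\Sigma(\mb'),S)$ under its own $\rho_1$-type map. The claim ``$\rho'\cdot i$ is an epimorphism'' is then the assertion that this is all of $\mg(N(\mb'),\Sigma(\mb'),D_a)$.

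The key steps, in order, are: (1) Take an arbitrary $\bar g\in\mg(N(\mb'),\Sigma(\mb'),D_a)$ and a representative $g$; observe that $g$ preserves the capping $3$-ball of $\mb'$ up to isotopy (it is the unique such ball meeting $\Sigma(\mb')$ in the disk complementary to $D_a$), so after isotopy $g$ fixes that $3$-ball, hence is supported on $\mb$. (2) Now $g$ is a diffeomorphism of the bubble $\mb$ (a genus $1$ handlebody-splitting piece) that may move $S=\partial\mb$ and the scar $D_a$; because $D_a$ is marked and $S$ is determined as the frontier, we can further isotope $g$ so that $g|_S=\mathrm{id}$ (using that any orientation-preserving diffeomorphism of a $2$-sphere fixing a disk is isotopic rel that disk to the identity, plus collar arguments). (3) Such a $g$, regarded as a diffeomorphism of $(N,\Sigma)$ that is the identity on $\mb'$, lies in $\dd(N,\Sigma,S)$, and its $\rho_1$-image lies in $G_{\vec\mu}$ and maps to $\mathrm{id}$ under $\rho$ (it acts trivially on $\Sigma(\mb)$ since it is supported on the other side), hence defines a class in $\mi(\rho)$. (4) Finally check that $\rho'$ of this class is $\bar g$: by construction the extension of $g|_{\mb'}=\mathrm{id}$ over the capping $3$-ball for $N(\mb')$ is, after capping, isotopic to $g$ itself on $\Sigma(\mb')$. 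Chasing the commuting square shows $\rho'\cdot i$ of the class equals $\bar g$, so $\rho'\cdot i$ is onto.

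I expect the main obstacle to be Step (2)–(3): making the identification of ``classes in $\mi(\rho)$'' with ``diffeomorphisms of $\mb$ fixing $S$'' completely rigorous, i.e.\ checking that a class killed by $\rho$ really is represented by something supported on $\mb$ and fixing $S$ pointwise, rather than merely fixing $[\vec\mu]$ and acting trivially on $\Sigma(\mb)$ up to isotopy. This requires carefully threading the epimorphisms $\rho_1,\rho_2$ and the rel-$S$ versions, using the uniqueness of the relevant capping balls and a collar/Alexander-trick argument on $S^2$, together with the earlier remark that all extensions of a given $f|_{\mb'}$ are pairwise isotopic. Once the bookkeeping of ``which side a diffeomorphism is supported on'' is pinned down, surjectivity of $\rho'\cdot i$ follows from the diagram chase; the geometric content is simply that the bubble $\mb$ appears on the $\Sigma(\mb')$ side in exactly the way needed to realize every mapping class there.
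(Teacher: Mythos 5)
Your core idea is the same as the paper's: realize an arbitrary $\bar g\in\mg(N(\mb'),\Sigma(\mb'),D_a)$ by a diffeomorphism of $(N,\Sigma)$ that is the identity on $\mb'$ and equals the desired map on $\mb$; such a diffeomorphism automatically has $\rho_2$-image $\mathrm{id}$ (hence lies in $\mi(\rho)$ after applying $\rho_1$) and has $\rho'_2$-image $\bar g$. The difference is in how the lift is produced. The paper simply invokes the previously noted surjectivity of $\rho'_2$ to get some $f\in\dd(N,\Sigma,S)$ with $\rho'_2(f)=\phi$, and then performs a cut-and-paste: $\hat f:=f|_{\mb}\cup\mathrm{id}_{\mb'}$ still lies in $\dd(N,\Sigma,S)$, has $\rho'_2(\hat f)=\rho'_2(f)=\phi$ and $\rho_2(\hat f)=\mathrm{id}$, and the diagram chase finishes the argument. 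You instead work directly in $N(\mb')$: isotope a representative $g$ so that it fixes the capping $3$-ball setwise (uniqueness of that ball up to isotopy, using that disks in a handlebody with the same boundary are isotopic rel boundary and that $N(\mb')$ is irreducible), then isotope to get $g|_S=\mathrm{id}$ (Alexander trick on $S^2$), then glue $g|_{\mb}$ to $\mathrm{id}_{\mb'}$. That works, but it is essentially a re-derivation of the (rel-$S$) surjectivity of $\rho'_2$ that the paper has already taken for granted; the paper's cut-and-paste replaces your isotopy arguments by a direct gluing and is more economical. A couple of small corrections to your write-up: the capping ball of $N(\mb')$ meets $\Sigma(\mb')$ precisely in $D_a$, not in "the disk complementary to $D_a$"; and in step (4) what gets extended over the capping ball is $g|_{\mb}$, not $g|_{\mb'}$ (the latter is the side being thrown away). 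Finally, the "main obstacle" you flag — pinning down exactly which classes lie in $\mi(\rho)$ — is not actually needed for surjectivity: you only have to produce, for each $\bar g$, one preimage in $\mi(\rho)$, which your construction does; characterizing all of $\mi(\rho)$ is a separate (and harder) question the lemma does not require.
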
 
\begin{proof}
By a similar argument for the dual bubble $\mb'$, we have the following commutative diagram 
$$
\begin{tikzcd}
	\dd\left(N,\Sigma,S\right)\arrow[rd,"\rho'_2"]  \arrow[r, "\rho_1"] & G_{\vec{\mu}} \arrow[d,"\rho'"]\\
	& \mg\left(N\left(\mb'\right),\Sigma\left(\mb'\right),D_{a}\right).
\end{tikzcd}
$$

Since $\rho_2$ is surjective, for any element $\phi\in \mg\left(N\left(\mb'\right), \Sigma\left(\mb'\right),D_{a}\right)$, we can find a diffeomorphism $f\in\dd\left(N,\Sigma,S\right)$ so that $\rho'_{2}(f)=\phi$. Then We  extend $f|_{\mb}$ by the identity to obtain a diffeomorphism $\hat{f}\in\dd\left(N,\Sigma,S\right)$. It is not hard to see that $\rho'\cdot \rho_{1} (\hat{f})=\rho'_{2}(\hat{f})=\rho'_{2}(f)=\phi$ and $\rho_{1}(\hat{f})\in\mi(\rho)$. Hence the lemma follows immediately.
\end{proof}
Subsequently, we have the following exact squence
\begin{equation}
     1\rightarrow\mi(\rho')\cap\mi(\rho)\overset{i'''}{\rightarrow} \mi(\rho)\overset{\rho'\cdot i}{\longrightarrow}\mg\left(N\left(\mb'\right),\Sigma\left(\mb'\right),D_{a}\right)\rightarrow 1.
\end{equation}
	Since the marked disk $D_{a}$ can be treated as a marked point in $\Sigma(\mb)$, we  apply the description for the kernel of \emph{the capping homomorphism}\cite[Proposition 3.19]{farb_primer_2012} to obtain that $\mi(\rho')\cap\mi(\rho)=\langle\tilde{\tau}_{\mu}\rangle$, where $\tilde{\tau}_{\mu}$ is the extension of \emph{Dehn twist} $\tau_{\mu}$ to the whole manifold $N$.
	\begin{lem}\label{ost}
	If  both $\mg\left(N\left(\mb\right),\Sigma\left(\mb\right)\right)$ and $\mg\left(N\left(\mb'\right),\Sigma\left(\mb'\right)\right)$ are finitely generated(or finitely presented), then $G_{\vec{\mu}}$ is finitely generated(or finitely presented) as well.
	\end{lem}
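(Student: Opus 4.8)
The plan is to extract the finite generation (resp. finite presentation) of $G_{\vec{\mu}}$ from the three exact sequences $(1)$, $(2)$, and $(3)$ using the standard fact that an extension $1\to K\to G\to Q\to 1$ has $G$ finitely generated whenever $K$ and $Q$ are, and $G$ finitely presented whenever $K$ is finitely generated and $Q$ is finitely presented. The first step is to pass from the hypothesis on $\mg\left(N\left(\mb\right),\Sigma\left(\mb\right)\right)$ to the corresponding marked-disk groups $\mg\left(N\left(\mb\right),\Sigma\left(\mb\right),D_a\right)$: since $D_a$ can be treated as a marked point, the Birman exact sequence (the capping/point-pushing sequence, \cite[Proposition 3.19]{farb_primer_2012}) relates $\mg\left(N\left(\mb\right),\Sigma\left(\mb\right),D_a\right)$ to $\mg\left(N\left(\mb\right),\Sigma\left(\mb\right)\right)$ with kernel a quotient of $\pi_1$ of the surface, hence finitely generated and finitely presented; so the marked-disk groups inherit finite generation (resp. finite presentation) from the unmarked ones, and likewise for $\mb'$.

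Next I would work up the chain of extensions from the bottom. From the exact sequence $(3)$, $\mi(\rho')\cap\mi(\rho)=\langle\tilde{\tau}_\mu\rangle\cong\mathbb{Z}$ (this identification is already recorded in the excerpt) is finitely presented, and $\mg\left(N\left(\mb'\right),\Sigma\left(\mb'\right),D_a\right)$ is finitely generated (resp. finitely presented) by the previous step; hence $\mi(\rho)$ is finitely generated (resp. finitely presented). Then I feed this into the exact sequence $(1)$: its kernel $\mi(\rho)$ is now known to be finitely generated (resp. finitely presented), and its quotient $\mg\left(N\left(\mb\right),\Sigma\left(\mb\right),D_a\right)$ is finitely generated (resp. finitely presented), so $G_{\vec{\mu}}$ is finitely generated; for the finite presentation conclusion I would instead combine the finite generation of $\mi(\rho)$ with the finite presentation of the quotient, which still yields finite presentation of $G_{\vec{\mu}}$ by the standard extension lemma. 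The sequence $(2)$ is not strictly needed once $(3)$ has been used, but it provides the symmetric route and a consistency check.

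The main obstacle I expect is bookkeeping rather than a deep point: one must be careful that $\mi(\rho')\cap\mi(\rho)$ is exactly $\langle\tilde\tau_\mu\rangle$ and not something larger, i.e.\ that the only diffeomorphisms lying in both kernels are powers of the Dehn twist about $\mu$ — this is where the ``marked disk as marked point'' reduction and the kernel description of the capping homomorphism are doing real work, and I would state explicitly why the marked disk $D_a$ (as opposed to a marked point) does not introduce extra mapping classes beyond the twist $\tau_\mu$. A second, purely citational, point is that the extension lemma in the finitely presented case requires the kernel to be finitely generated and the quotient finitely presented, so the asymmetry between sequences $(1)$ and $(3)$ must be invoked in the correct order (use $(3)$ to make $\mi(\rho)$ finitely generated, then $(1)$ with $\mi(\rho)$ as kernel). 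Assembling these, the lemma follows.

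\begin{proof}
Recall the standard fact that in a short exact sequence of groups $1\to K\to G\to Q\to 1$, if $K$ and $Q$ are finitely generated then so is $G$, and if $K$ is finitely generated and $Q$ is finitely presented then $G$ is finitely presented.

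First, since the marked disk $D_a$ may be regarded as a marked point of $\Sigma(\mb)$, the forgetful map $\mg\left(N(\mb),\Sigma(\mb),D_a\right)\to\mg\left(N(\mb),\Sigma(\mb)\right)$ fits into a Birman-type exact sequence whose kernel is a quotient of $\pi_1$ of the Heegaard surface, which is finitely presented. Hence if $\mg\left(N(\mb),\Sigma(\mb)\right)$ is finitely generated (resp.\ finitely presented), then so is $\mg\left(N(\mb),\Sigma(\mb),D_a\right)$; the same applies to the dual bubble $\mb'$.

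Now apply the fact above to the exact sequence $(3)$. Its kernel $\mi(\rho')\cap\mi(\rho)=\langle\tilde\tau_\mu\rangle\cong\mathbb{Z}$ is finitely presented, and its quotient $\mg\left(N(\mb'),\Sigma(\mb'),D_a\right)$ is finitely generated (resp.\ finitely presented) by the previous paragraph. Therefore $\mi(\rho)$ is finitely generated (resp.\ finitely presented).

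Finally apply the fact to the exact sequence $(1)$, whose kernel is $\mi(\rho)$ and whose quotient is $\mg\left(N(\mb),\Sigma(\mb),D_a\right)$. In the finitely generated case, both $\mi(\rho)$ and the quotient are finitely generated, so $G_{\vec\mu}$ is finitely generated. In the finitely presented case, $\mi(\rho)$ is finitely generated and the quotient is finitely presented, so $G_{\vec\mu}$ is finitely presented. This proves the lemma.
\end{proof}
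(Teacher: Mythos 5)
Your argument follows the same route as the paper's: pass from the marked-disk Goeritz groups to the unmarked ones via a Birman-type exact sequence, then feed the exact sequences $(3)$ and $(1)$ into an extension lemma. However, the ``standard fact'' you state at the outset is not correct as phrased: it is false in general that a short exact sequence $1\to K\to G\to Q\to 1$ with $K$ merely finitely generated and $Q$ finitely presented forces $G$ to be finitely presented. For a counterexample, take $K$ finitely generated but not finitely presented and set $G=K\times\mathbb{Z}$; then $Q\cong\mathbb{Z}$ is finitely presented, yet $G$ is not, since finite presentability passes to retracts and $K$ is a retract of $G$. The correct extension lemma requires $K$ itself to be finitely presented (both $K$ and $Q$ of type $F_2$ gives $G$ of type $F_2$).

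Fortunately the gap is cosmetic rather than substantive here: your application of $(3)$ already shows $\mi(\rho)$ is finitely \emph{presented} in the finitely presented case, because the kernel $\langle\tilde{\tau}_{\mu}\rangle\cong\mathbb{Z}$ and the quotient $\mg\left(N(\mb'),\Sigma(\mb'),D_a\right)$ are both finitely presented. So when you then apply the extension lemma to $(1)$, you should invoke that $\mi(\rho)$ is finitely presented rather than merely finitely generated; with that correction the chain of implications is sound and agrees with the paper's proof. One further small point: the paper treats the case $g\left(\Sigma(\mb)\right)=1$ separately, where the Birman sequence degenerates to an isomorphism; your phrasing that the kernel is a quotient of $\pi_1$ of the surface technically covers this (the kernel is trivial there), but it is worth saying so explicitly.
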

	\begin{proof}
		If the genus $g\left(\Sigma\left(\mb\right)\right)=1$, we have $\mg\left(N\left(\mb\right),\Sigma\left(\mb\right),D_{a}\right)=\mg\left(N\left(\mb\right),\Sigma\left(\mb\right)\right)$. Then there is nothing to prove. If $g\left(\Sigma\left(\mb\right)\right)\geqslant 2$, we apply the \emph{Birman exact sequence} for the pair $\left(\Sigma\left(\mb\right),D_{a}\right)$ to obtain the following commutative diagram,
$$
\begin{tikzcd}
  1 \arrow[r] & K \arrow[r, "Push"] \arrow[d, "i"] & \mg\left(N\left(\mb\right),\Sigma\left(\mb\right),D_{a}\right) \arrow[r, "Forget"] \arrow[d, "i"] & \mg\left(N\left(\mb\right),\Sigma\left(\mb\right)\right) \arrow[r] \arrow[d, "i"] & 1 \\
  1 \arrow[r] & \pi_{1}\left(\Sigma\left(\mb\right)\right) \arrow[r, "Push"] & Mod\left(\Sigma\left(\mb\right),D_{a}\right) \arrow[r, "Forget"] & Mod\left(\Sigma\left(\mb\right)\right) \arrow[r] & 1
\end{tikzcd}
$$where $i$ denotes the inclusion map. The \emph{Birman exact sequence} provides a description for the kernel of the $Forget$ map, which asserts that the kernel is generated by the isotopies(of $\Sigma\left(\mb\right)$) that push $D_a$ along a closed path(begins and ends at $D_a$) in $\Sigma\left(\mb\right)$. Note that all such isotopies can be extended to the whole manifold $N\left(\mb\right)$. It follows that $K=\pi_{1}\left(\Sigma\left(\mb\right)\right)$. Since both $\pi_{1}\left(\Sigma\left(\mb\right)\right)$ and $\mg\left(N\left(\mb\right),\Sigma\left(\mb\right)\right)$ are finitely generated(or finitely presented), so is $\mg\left(N\left(\mb\right),\Sigma\left(\mb\right),D_{a}\right)$. Similarly, $\mg\left(N\left(\mb'\right),\Sigma\left(\mb'\right),D_{a}\right)$ is also finitely generated(or finitely presented). Then by the exact sequence (3), we know that $\mi(\rho)$ is finitely generated(or finitely presented). Moreover, by the exact sequence (1), $G_{\vec{\mu}}$ is also finitely generated(or finitely presented).
\end{proof}
With all the above lemmas, we can present the proof of Theorem \ref{thm2} as follows.
\begin{proof}[Proof of Theorem \ref{thm2}]
     By Lemma \ref{ost}, we know that $G_{\vec{\mu}}$ is finitely generated(or finitely presented). Since $G_{\vec{\mu}}$ is a subgroup of $G_{\mu}$ with index at most two, $G_{\mu}$ is also finitely generated(or finitely presented).

\end{proof}
The genus at most two \emph{Goeritz groups} for lens spaces or their connected sum are all finitely generated in \cite{cho_genus-two_2013,cho_connected_2016,cho_mapping_2019}. 
Then by Theorem \ref{thm2}, the stabilizer $G_{\mu_{i}}$, which is exactly the group $\mh_i$, is finitely generated. By Theorem \ref{th}, it follows that $\mg\left(N, \Sigma\right)$ is finitely generated. So we complete the proof of Theorem \ref{finitely}.

\bibliographystyle{alpha}

\end{document}